\numberwithin{equation}{section}
\theoremstyle{plain}
\newtheorem{theorem}{Theorem}[section]
\newtheorem{lemma}[theorem]{Lemma}
\newtheorem{proposition}[theorem]{Proposition}
\newtheorem{defn}[theorem]{Definition}
\theoremstyle{plain}
\numberwithin{equation}{section}
\theoremstyle{remark}
\newtheorem{remark}[theorem]{Remark}
\newtheorem{example}[theorem]{Example}
\DeclareMathOperator{\arccosh}{arccosh}
\DeclareMathOperator{\Tr}{Tr}
\begin{document}
\date{\today}

\title[Riemannian metrics with prescribed volume and finite spectrum]
{Riemannian metrics with prescribed volume and finite parts of Dirichlet spectrum}

\author{Xiang He, Zuoqin Wang}
\thanks{Partially supported by  National Key R and D Program of China 2020YFA0713100, and
by NSFC no. 12171446, 11721101.}
\address{School of Mathematical Sciences\\
University of Science and Technology of China\\
Hefei, 230026\\ P.R. China\\}
\email{hx1224@mail.ustc.edu.cn}

\address{School of Mathematical Sciences\\
University of Science and Technology of China\\
Hefei, 230026\\ P.R. China\\}
\email{wangzuoq@ustc.edu.cn}

\begin{abstract}
  In this paper we study the problem of prescribing Dirichlet eigenvalues on an arbitrary compact manifold $M$  of dimension $n\geq 3$ with a non-empty smooth boundary $\partial M$. We show that for any finite increasing sequence of real numbers $0<a_1<a_2 \leq a_3 \leq \cdots \leq a_N$ and any positive number $V$, there exists a Riemannian metric $g$ on $M$ such that $\mathrm{Vol}(M,g)=V$ and $\lambda^\mathcal{D}_k(M,g)=a_k$ for any integer $1 \leq k \leq N$.
\end{abstract}

\maketitle

\section{Introduction}

Let $M$ be a connected compact smooth manifold, either without boundary or with a smooth boundary $\partial M$. Associated with any Riemannian metric $g$ on $M$, there is a very important geometric operator, the Laplace-Beltrami operator $\Delta_g$, which lies in the center of differential geometry. The interplay between the spectral properties of $\Delta_g$  and  the geometric properties of the underlying space $(M, g)$ has received much attention in the past, especially after M. Kac raised his famous  question ``Can one hear the shape of a drum" in the seminal paper \cite{Kac}.

Among the many different eigenvalue problems associated with $\Delta_g$ that have been studied extensively in literature, three are most widely known, namely the closed eigenvalue problem when $M$ is closed, and the Neumann eigenvalue problem or the Dirichlet eigenvalue problem when $M$ has a non-empty boundary $\partial M$. In all these settings, the eigenvalues form a discrete increasing sequence of real numbers,
\[0 \le \lambda_1 < \lambda_2 \le \lambda_3 \le \cdots \le \lambda_n \le \cdots \to \infty,\]
where $\lambda_1=0$ for the closed and the Neumann eigenvalue problem (with eigenfunction the constant functions), and $\lambda_1>0$ for the Dirichlet eigenvalue problem. It is well-known that this sequence of real numbers encodes many important geometric/dynamical information of the background manifold, and one important goal in spectral geometry is to study the spectral map
\[
\mathcal S: \mathcal{M}(M) \to \mathbb R_{\ge 0, \nearrow}^\omega, \quad g \mapsto (\lambda_1, \lambda_2, \cdots)
\]
where $\mathcal{M}(M)$ is the space of all Riemannian metrics on $M$, and $\mathbb R_{\ge 0, \nearrow}^\omega$ is the set of all increasing sequence of non-negative real numbers that diverges to $\infty$.

According to the famous Weyl law,  large eigenvalues obey the growth rate
\[\lambda_k \sim  C(n, \mathrm{Vol}(M)) k^{2/n} \quad \text{as} \quad k \to \infty,\]
where $C(n, \mathrm{Vol}(M))$ is a constant that depends only on the dimension $n$ and the volume
$\mathrm{Vol}(M)$ of $(M, g)$. In particular, the spectral map $\mathcal S$ is far from  being surjective since it has a ``sparse tail part".
On the other hand,  a celebrated result of  Y. Colin de Verdiere (\cite{CV}, see also \cite{C-C} for an analogous  result on surfaces) tells us that for any closed manifold $M$ of dimension at least 3, and any given finite sequence of real numbers $0<a_2 \le \cdots  \le a_{N+1}$, there is a Riemannian metric $g$ on $M$ so that $\lambda_k(g)=a_k$. In other words,
if we look at the ``head part" of the spectral map $\mathcal S$, then the $N$-spectral map $\mathcal S^N$
 \[
\mathcal S^N: \mathcal{M}(M) \to \mathbb R_{> 0, \nearrow}^N, \quad g \mapsto (\lambda_2, \cdots, \lambda_{N+1}),
\]
of the closed eigenvalue problem is surjective for any $N \ge 1$. %
%
The Riemannian metric constructed in \cite{CV} may have a fairly small volume. However, by applying some kind of doubling surface trick via the crushed ice argument, J. Lohkamp showed in \cite{JL96} that not only one can prescribe the first $N$ closed eigenvalues, but also one can prescribe the volume and the total scalar curvature of $(M, g)$. For more results on prescribing finitely many eigenvalues, c.f. \cite{MD}, \cite{PJ}, \cite{PJ12} and \cite{PJ14} etc.

It is not hard to generalize the methods used in \cite{CV} directly to Neumann eigenvalues for manifolds with boundary, and obtain the same surjectivity result for Neumann eigenvalues. However, the case of Dirichlet eigenvalues is more complicated, especially since the existence of the first eigenvalue $\lambda_1$ which is positive and is simple. In fact, A. Hassanezhad, G. Kokarev and I. Polterovich proposed as an open problem (See Open Problem 4  in \cite{HKP}) whether there exists a Riemannian metric on a manifold of dimension at least 3 with boundary whose $k$th eigenvalue has prescribed multiplicity. In this paper we will give an affirmative  answer to this problem. In fact, we will show that on any manifold of dimension at least 3 with non-empty boundary, one can prescribe the first $N$ eigenvalues as well as the volume. Note that in contrast with the closed or Neumann eigenvalue case, for Dirichlet eigenvalues, the first eigenvalue can only has multiplicity 1. We also remark that the result does not conflict with the so-called universal inequalities for Dirichlet eigenvalues, since those inequalities depends on the geometry (especially the curvatures) of $(M, g)$.

To distinguish, we will use $\lambda_k^{\mathcal D}$ to represent the Dirichlet eigenvalues of $\Delta_g$.
The main result in this paper is
\begin{theorem}\label{mainthm}Let $M$ be a compact smooth manifold  of dimension $n\geq3$ with non-empty smooth boundary. Then for any given sequence $0<a_1<a_2\leq\cdots\leq a_N$ and any positive number $V$, there exists a Riemannian metric $g$ on $M$ such that
\[\mathrm{Vol}(M,g)=V\quad \text{and} \quad \lambda_k^\mathcal{D}(M,g)=a_k, \forall 1\leq k\leq N.\]
\end{theorem}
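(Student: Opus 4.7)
The plan is to adapt Colin de Verdière's metric construction from \cite{CV} to the Dirichlet setting, combined with a volume-tuning device. I build $g$ on three regions of $M$ with distinct roles: a ``shielding region'' covering most of $M$, a ``design region'' realizing the target spectrum, and a ``balloon region'' tuning the volume without disturbing the low eigenmodes.

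Concretely, I fix a small open half-ball $D \subset M$ whose closure meets $\partial M$ in a smooth open piece $\Sigma$, and on $M \setminus D$ install a uniformly scaled metric $\varepsilon^{2} g_{0}$. Since scaling by $\varepsilon^{2}$ multiplies Dirichlet eigenvalues by $\varepsilon^{-2}$ and volumes by $\varepsilon^{n}$, for $\varepsilon$ small enough the Dirichlet problem on $(M \setminus D, \varepsilon^{2} g_{0})$ (with Dirichlet data on its entire boundary) has first eigenvalue above $a_{N}+1$ and negligible volume. A standard Dirichlet--Neumann bracketing argument then forces the first $N$ Dirichlet eigenvalues of $(M, g)$ to be close to those of the Dirichlet problem on $(D, g|_{D})$, reducing the task to realizing the prescribed spectrum on the half-ball $D$.

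On $D$ I mimic the graph-thickening construction with a \emph{rooted} star graph: one root vertex $v_{0}$ (identified with $\Sigma$) and $N$ pendant leaves $v_{1}, \dots, v_{N}$ attached by weighted edges. Picking edge weights $c_{k}$ and leaf masses $m_{k}$ so that $c_{k}/m_{k} = a_{k}$ makes the discrete Dirichlet Laplacian (with $f(v_{0})=0$) equal to $\mathrm{diag}(a_{1}, \dots, a_{N})$. I realize this graph as a ``starfish'' submanifold of $D$ --- a shallow cap along $\Sigma$ with $N$ thin tubes emanating inward, each terminating in a small bead --- and surround the starfish by a scaled shielding metric as above. Classical spectral-convergence results for Dirichlet problems on collapsing thin graph-like domains then imply that the first $N$ Dirichlet eigenvalues of $(D, g|_{D})$ converge to $(a_{1}, \dots, a_{N})$ as the tube cross-sections are shrunk. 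To enforce $\mathrm{Vol}(M, g) = V$, I attach inside $D$ a separate ``balloon'' region joined to the starfish by a very thin neck; its volume can be tuned freely over a wide range without affecting the leading Dirichlet spectrum. Finally, to secure exact equality $\lambda_{k}^{\mathcal D}(M,g) = a_{k}$, I apply the implicit function theorem to the finite-parameter family (tube cross-sections, bead sizes, balloon volume): near the model parameters, the parameter-to-spectrum-and-volume map is $C^{0}$-close to the graph model, which is manifestly a local diffeomorphism.

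The main analytic obstacle is the quantitative Dirichlet spectral-convergence estimate under tube-thinning. One must rule out spurious low eigenmodes in the shielding region and in the collar connecting $\Sigma$ to the central cap, which requires Hardy--Poincar\'e inequalities that propagate the Dirichlet condition on $\Sigma$ through the thin collar into the interior of the starfish. A secondary subtlety is that $\lambda_{1}^{\mathcal D}$ is necessarily simple, even when higher multiplicities are prescribed: I handle this by keeping the leaf corresponding to $a_{1}$ singled out and well-separated from the others, and generating any coincidence $a_{k} = a_{k+1}$ with $k \geq 2$ via pairs of \emph{geometrically congruent} leaves, so that their block is an exact scalar whose multiplicity is preserved under the thinning perturbation.
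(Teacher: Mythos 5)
Your plan correctly identifies the three-part structure (discrete graph model, thickening with spectral convergence, shielding and volume tuning), and the shielding-by-scaling idea is in the same spirit as the paper's Lemma \ref{At}. However, the star graph cannot handle repeated eigenvalues $a_k = a_{k+1}$, and this is the central difficulty, not a bookkeeping issue. The discrete Dirichlet Laplacian of the star is diagonal, so your $N$ edge weights parameterize only the $N$-dimensional slice of diagonal forms inside the $\tfrac{N(N+1)}{2}$-dimensional space $\mathcal{Q}(H)$ of quadratic forms on the $N$-eigenspace; thickening generically introduces off-diagonal coupling which splits a degenerate pair, and you have no parameters with which to suppress it. The implicit function theorem does not apply: the ordered-eigenvalue map is not smooth at a multiple eigenvalue and is not even a local homeomorphism there — already for a $2\times 2$ block the map $(q_{22},q_{23},q_{33})\mapsto(\lambda_2,\lambda_3)$ drops rank — so $C^0$-closeness plus a degree argument can miss the diagonal. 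The ``geometrically congruent leaves'' idea presupposes an isometry of the full Dirichlet problem on $M$ exchanging the two tubes, but the embedding into $M$, the shielding metric and the gluings all break this symmetry, so the degeneracy is unprotected. This is exactly why the paper uses the \emph{complete} graph $G_N$ with $|E|=\tfrac{N(N+1)}{2}$ edge weights, making $\Phi:\Theta\mapsto q_\Theta$ a local diffeomorphism onto an open subset of $\mathcal{Q}(H)$, and then closes the argument with Brouwer's theorem (Lemma \ref{fixed}) rather than the IFT. (The paper's own remark that the forthcoming surface paper, using a smaller graph, reaches only \emph{strictly increasing} sequences confirms that a small graph does not suffice for multiplicities.)

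Second, a ``balloon'' joined by a thin neck does introduce a new low Dirichlet eigenvalue of order $\mathrm{cap}(\mathrm{neck})/\mathrm{Vol}(\mathrm{balloon})$: the test function equal to $1$ on the balloon, decaying linearly across the neck, and zero beyond is admissible and has arbitrarily small Rayleigh quotient precisely when you inflate the balloon, contradicting the claim that its volume can be tuned freely without affecting the low spectrum. The paper's device is a \emph{thin elongated cuboid} $R=[0,a]^{n-1}\times[0,b]$ attached along a tiny $(n-1)$-ball $I\subset\partial M$ with Dirichlet condition on the rest of $\partial R$: then $\lambda_1^{\mathcal D}(R)\approx\pi^2\bigl((n-1)/a^2+1/b^2\bigr)$ stays large for small $a$ no matter how long $b$ is, while $\mathrm{Vol}(R)=a^{n-1}b$ is free, and Lemma \ref{van Neu} controls the effect of the tiny attachment. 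The essential point is that the volume device must carry the Dirichlet condition on essentially all of its boundary and be thin in some transverse direction, so that its own Dirichlet eigenvalue is pinned high; a round balloon meeting a Dirichlet condition only across a thin neck cannot satisfy both constraints simultaneously.
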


We briefly explain the idea of the proof. As in  \cite{CV} and  \cite{JL96},  the central idea of the proof is a stability condition that was first developed by V. Arnol'd in \cite{Arn} for the finite dimensional case. Roughly speaking, stability condition tells us that the eigenvalues are independent, so that one can perturb them to the values we want. More precisely, the theorem is proved in three steps. First we will construct a discrete graph with boundary whose Dirichlet eigenvalues (of the combinatorial Laplacian) are the given sequence of numbers. This can be done by studying the graph $G_N$ which is formed by adding $N$ boundary points to the complete graph of $N$ vertices. This step can be viewed as a modification of the corresponding construction in \cite{CV}. Then in the second step, we will construct a Riemann surface of very high genus and with lots of boundary components, and prove that there exists a Riemannian metric on it with prescribed first $N$ eigenvalues. In fact, the Riemann surface is constructed so that the graph associated with its thin-thick decomposition is exactly the discrete graph $G_N$.
In the third step, by embedding the surface constructed in Step two into the given manifold $M$, we are able to prove the existence of the desired Riemannian metric on $M$ (which is   concentrated near the embedded surface) with the prescribed eigenvalues by using the stability argument. Finally to prescribe the volume, we attach an elongated $n$-dimensional cuboid to the boundary of $M$ and apply the stability argument again. This last step of prescribing volume is new and is much simpler than the method used in \cite{JL96}, moreover it can be applied in the case of surfaces.

We remark that the same argument does not work for the case $n=2$, since  one can not embed a complete graph of $N$ vertices into the surface $\Sigma_{k,l}$ (the surface obtained by removing $l$ small disks from the closed surface $\Sigma_k$ of genus $k$) unless $k$ is large enough. In fact, the same conclusion  fail  for surfaces with boundary, because there is a topological restriction on the multiplicity of Dirichlet eigenvalues. For example, in \cite{Ber} A. Berdnikov showed that for any surface $M$ with $\chi(M)+b<0$, where $b$ is the number of boundary components of $M$, the $k$th Dirichlet eigenvalue has multiplicity no more than $2k-2(\chi(M)+b)+1$. In a forthcoming paper \cite{He} we will study the problem of prescribing Dirichlet eigenvalues of a surface with boundary, and by using a different graph, show that there is a Riemannian metric whose first $N$ Dirichlet eigenvalues are a given strictly increasing finite sequence of positive numbers. This can be viewed as an analogue of the corresponding result in \cite{CV} for  Dirichlet eigenvalues of surfaces with boundary.

The paper is arranged as follows. In Section 2 we will collect some results on spectral convergence that will be used later, and explain the notion of the stable metric. The proof of Theorem \ref{ct1} will be postponed to Section 6. Then in Section 3 we construct the discrete graph $G_N$, and in Section 4 we construct a Riemann surface with boundary modelled on $G_N$ with prescribed eigenvalues.  Finally in Section 5 we finish the proof of the main theorem.

\section{Spectral convergence and stable metrics}

In this section we list some backgrounds on spectral convergence and stable metrics that will be used later.

In studying the spectrum of the Laplace operator, it is more convenient to study the quadratic form associated to $\Delta$, namely the Dirichlet integral
\[
q(f) = \int_M |df|^2.
\]
It is well known that the Dirichlet eigenvalues of $\Delta$ are eigenvalues associated to $q$ on $H^1_0(M)$, while the Neumann eigenvalues (and the closed eigenvalues when $M$ is a closed manifold) are eigenvalues associated to $q$ on $H^1(M)$.

For any closed positive quadratic form $q$ with domain $D(q)$, there is a unique positive self-adjoint operator $A$ (whose domain is dense in $D(q)$) so that $q$ is the quadratic form of $A$ (c.f.  Theorem VIII.15 in \cite{RS}). Suppose $A$ has discrete spectrum $\{\lambda_i\}_{i=1}^\infty$ and suppose $\lambda_{N+1}>\lambda_N$, then the subspace of $D(q)$ generated by the first $N$ eigenfunctions of $A$ is well-defined, and is called the \textbf{$N$-eigenspace} of $q$ (resp. $A$).  Unless otherwise specified, all quadratic forms considered below will have discrete spectrum.

\subsection{Spectral convergence}\label{Aoe}

\noindent

For given integer $N>0$, let $E_0$ and $E_1$ be two subspaces of  dimension $N$ of a real Hilbert space $(\mathcal{H},\langle\ ,\ \rangle)$. Endow each $E_i$ with an inner product
 \begin{equation*}
 \langle x,y \rangle_i=\langle A_ix,y \rangle,\ x,y\in E_i, \qquad i=0,1,
 \end{equation*}
where $A_i: E_i \to E_i$ is a strictly positive operator. Moreover, suppose $E_1$ is \textbf{close} to $E_0$, in the sense that $E_1$ is the graph of a  bounded linear map $B\in\mathcal{L}(E_0,E_0^\perp)$. Consider an isometry
 \begin{equation}\label{iso}
 U_{E_0,E_1}=A_1^{-\frac{1}{2}}\mathcal{B}(\mathcal{B}^*\mathcal{B})^{-\frac{1}{2}}A_0^{\frac{1}{2}}
 \end{equation}
from $(E_0,\langle\ ,\ \rangle_0)$ to $(E_1,\langle\ ,\ \rangle_1)$, where $\mathcal{B}=I+B\in\mathcal{L}({E_0,E_1})$.
The following definition is taken from \cite{CV2}:
\begin{defn}\label{sd}
For positive quadratic forms  $q_i$ on $(E_i,\langle\ ,\ \rangle_i)$, $i=0,1$, if
 \[\|q_1\circ U_{E_0,E_1}-q_0\|_\infty \leq\varepsilon,\]
then we say $(E_0,\langle\ ,\ \rangle_0,q_0)$ and $(E_1,\langle\ ,\ \rangle_1,q_1)$ are $\varepsilon$-\textbf{close}. In particular, if $E_i$ is the $N$-eigenspace of some quadratic form $Q_i$, $q_i=Q_i|_{E_i}$, and if $(E_0,\langle\ ,\ \rangle_0,q_0)$ and $(E_1,\langle\ ,\ \rangle_1,q_1)$ are  $\varepsilon$-close, then we say $Q_0$ and $Q_1$ have an \textbf{$N$-spectral difference $\leq\varepsilon$}.
\end{defn}

A useful criterion for $\varepsilon$-closeness is given by Y. Colin de Verdi\`ere:
\begin{lemma}[\cite{CV2}, Critere I.3]\label{crit}
For any $\varepsilon>0$, there exists $M$ and $\alpha_i>0$ $(1\leq i\leq 5)$ depending on $\varepsilon$, such that if $\|q_1\|\leq M$, $\|A_0-I\|\leq\alpha_1$, $\|A_1-I\|\leq\alpha_2$, $\|B\|\leq\alpha_3$, $\max_{1\leq j\leq N}|\lambda_j(q_1)-\lambda_j(q_0)|\leq\alpha_4$, $q_1(x+Bx)\geq q_0(x)-\alpha_5|x|^2$, then $(E_0,\langle\ ,\ \rangle_0,q_0)$ and $(E_1,\langle\ ,\ \rangle_1,q_1)$ are $\varepsilon$-close.
\end{lemma}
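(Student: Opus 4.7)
I aim to prove the two-sided operator-norm bound $\|q_1\circ U_{E_0,E_1}-q_0\|_\infty \leq \varepsilon$ by combining a pointwise quadratic-form inequality (from the hypothesis $q_1(x+Bx)\geq q_0(x)-\alpha_5|x|^2$) with a trace estimate (from the eigenvalue hypothesis). The key observation is that when all the perturbation parameters are small, the isometry $U_{E_0,E_1}$ is close to the graph embedding $\mathcal{B}:x\mapsto x+Bx$, so pointwise bounds involving $\mathcal{B}$ transfer to pointwise bounds involving $U_{E_0,E_1}$.

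The first step is to quantify how close $U_{E_0,E_1}$ is to $\mathcal{B}$. Since $B(E_0)\subset E_0^\perp$, a direct computation gives $\mathcal{B}^*\mathcal{B}=I_{E_0}+B^*B$. First-order expansions of $A_i^{\pm 1/2}$ and $(I+B^*B)^{-1/2}$ then yield
\[
\|U_{E_0,E_1}x-\mathcal{B}x\|\leq \eta(\alpha_1,\alpha_2,\alpha_3)\|x\|,
\]
with $\eta\to 0$ as the $\alpha_i\to 0$ (here the norm is the ambient Hilbert norm, which by $\|A_i-I\|\leq\alpha_i$ is uniformly comparable to $\|\cdot\|_i$). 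Writing $U_{E_0,E_1}x=\mathcal{B}x+r_x$ and expanding $q_1(U_{E_0,E_1}x)=q_1(\mathcal{B}x)+2b_1(\mathcal{B}x,r_x)+q_1(r_x)$, I bound the cross terms via $\|q_1\|\leq M$ and Cauchy--Schwarz to obtain $q_1(U_{E_0,E_1}x)\geq q_1(\mathcal{B}x)-c_1|x|^2$ with $c_1=c_1(\alpha_1,\alpha_2,\alpha_3,M)\to 0$. Combining this with the hypothesis on $\mathcal{B}$ yields
\[
q_1(U_{E_0,E_1}x)-q_0(x)\geq -c\,\|x\|_0^2
\]
for a constant $c=c(\alpha_1,\ldots,\alpha_5,M)$ that can be made arbitrarily small.

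For the opposite inequality I exploit the isometric nature of $U_{E_0,E_1}$: the spectrum of $q_1\circ U_{E_0,E_1}$ on $(E_0,\langle\,,\,\rangle_0)$ coincides with the spectrum of $q_1$ on $(E_1,\langle\,,\,\rangle_1)$, namely $\lambda_1(q_1),\dots,\lambda_N(q_1)$. Hence the eigenvalue hypothesis gives
\[
\Tr_{\langle\,,\,\rangle_0}\bigl(q_1\circ U_{E_0,E_1}-q_0\bigr)=\sum_{j=1}^N\bigl(\lambda_j(q_1)-\lambda_j(q_0)\bigr)\leq N\alpha_4.
\]
Let $R=q_1\circ U_{E_0,E_1}-q_0$, viewed as a symmetric operator on $(E_0,\langle\,,\,\rangle_0)$. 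The lower bound says $R+c\,\mathrm{id}\geq 0$, and the trace estimate gives $\Tr(R+c\,\mathrm{id})\leq N(c+\alpha_4)$. Since a positive self-adjoint operator has operator norm bounded by its trace, $\|R+c\,\mathrm{id}\|_\infty\leq N(c+\alpha_4)$, hence $\|R\|_\infty\leq\|R+c\,\mathrm{id}\|_\infty+c\leq (N+1)c+N\alpha_4$. Choosing $\alpha_1,\ldots,\alpha_5$ so small that the right-hand side does not exceed $\varepsilon$ finishes the proof (with $M$ allowed to be any prescribed finite constant).

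The main obstacle is the bookkeeping in the first step: one must carefully expand the product $A_1^{-1/2}\mathcal{B}(\mathcal{B}^*\mathcal{B})^{-1/2}A_0^{1/2}$ and make the dependence of the error on each $\alpha_i$ explicit, which is tedious but routine. The conceptual crux lies in the final step: the rigidity principle that a positive semidefinite operator with small trace automatically has small operator norm is what upgrades a one-sided pointwise bound plus an eigenvalue estimate into the desired two-sided norm bound.
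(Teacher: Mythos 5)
The paper does not supply its own proof of this lemma; it is quoted verbatim as Crit\`ere I.3 from \cite{CV2} and used as a black box, so there is no in-paper argument to compare against. Your proof, however, is correct and captures the essential mechanism.

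The key structure of your argument is sound: (i) since $B(E_0)\subset E_0^\perp$ one indeed has $\mathcal{B}^*\mathcal{B}=I+B^*B$, so the four factors in $U_{E_0,E_1}=A_1^{-1/2}\mathcal{B}(\mathcal{B}^*\mathcal{B})^{-1/2}A_0^{1/2}$ each differ from $I$ (respectively $I+B$) by $O(\alpha_2)$, $O(\alpha_3)$, $O(\alpha_3^2)$, $O(\alpha_1)$, giving $\|U_{E_0,E_1}-\mathcal{B}\|\to 0$; combined with $\|q_1\|\le M$ and the hypothesis $q_1(\mathcal{B}x)\ge q_0(x)-\alpha_5|x|^2$ this produces the one-sided operator bound $\hat R:=\widehat{q_1\circ U_{E_0,E_1}}-\hat q_0\ge -c\cdot\mathrm{id}$ with $c\to 0$; (ii) because $U_{E_0,E_1}$ is an isometry $(E_0,\langle\,,\,\rangle_0)\to(E_1,\langle\,,\,\rangle_1)$, the operator $\widehat{q_1\circ U_{E_0,E_1}}$ is unitarily conjugate to $\hat q_1$, so $\Tr\hat R=\sum_j(\lambda_j(q_1)-\lambda_j(q_0))\le N\alpha_4$; (iii) for a positive semidefinite operator on an $N$-dimensional space the operator norm is bounded by the trace, whence $\|\hat R\|\le\|\hat R+c\cdot\mathrm{id}\|+c\le N(\alpha_4+c)+c$. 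This trace-upgrade step is exactly the device that converts a one-sided pointwise estimate plus an eigenvalue estimate into a two-sided norm estimate, and is the right way to prove the lemma. The only point worth flagging is a reading of the statement: the constants $\alpha_i$ should be taken to depend on both $\varepsilon$ and $M$ (your $c_1$ visibly depends on $M$), which matches how the lemma is used later with $M$ fixed a priori.
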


\par Let $Q$ be a closed positive quadratic form on $\mathcal{H}$ with discrete spectrum, then we say $Q$ verifies hypothesis ($\ast$) if the eigenvalues of $Q$ satisfy the inequalities:
\[\label{hoe}
 \tag{$\ast$}\ \lambda_1\leq\cdots\leq\lambda_N\leq\lambda_N+\delta \le \lambda_{N+1}\leq M ,
\]
where $M,N,\delta$ will be fixed once and for all in what following.

\begin{theorem}\label{ct1}Let $Q$ be a positive quadratic form on Hilbert space $\mathcal{H}$ whose domain admits the $Q$-orthogonal decomposition $D(Q)=\mathcal{H}_0\oplus\mathcal{H}_\infty$. Suppose $Q_0=Q|_{\mathcal{H}_0}$ verifies the hypothesis (\ref{hoe}) and $\{\mu_i\}$ are the eigenvalues of $Q_0$.
{Let $\tilde \delta$ be the infimum of the gap between any two different eigenvalues from $\mu_1$ to $\mu_{N+1}$,}
then for any $\varepsilon>0$, there exists a constants $C>0$ (depends on $\tilde \delta,M,N,\varepsilon$) such that if $Q(x)\geq C|x|^2$ for all $x\in\mathcal{H}_\infty$, then $Q_0$ and $Q$ have a $N$-spectral difference $\leq\varepsilon$.
\end{theorem}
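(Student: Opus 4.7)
The plan is to apply Lemma~\ref{crit} with $E_0$ the $N$-eigenspace of $Q_0$ (viewed inside $\mathcal{H}_0\subset\mathcal{H}$) and $E_1$ the $N$-eigenspace of $Q$, both carrying the ambient inner product; then $A_0=A_1=I$, so the two threshold conditions $\|A_i-I\|\le\alpha_i$ hold trivially, while $\|q_1\|\le M$ follows from the routine min-max upper bound $\lambda_k(Q)\le\mu_k\le M$ obtained by testing with the first $k$ eigenfunctions of $Q_0$ as elements of $\mathcal{H}_0\subset D(Q)$. The content lies in the three remaining hypotheses: eigenvalue closeness, smallness of the graph operator $B$, and the lower bound $q_1(x+Bx)\ge q_0(x)-\alpha_5|x|^2$.

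First I would establish a concentration estimate: for any $\phi\in D(Q)$ with $Q(\phi)\le M|\phi|^2$, the $Q$-orthogonality of the decomposition together with coercivity on $\mathcal{H}_\infty$ gives
\[
C\,|\phi^\infty|^2\le Q_\infty(\phi^\infty)=Q(\phi)-Q_0(\phi^0)\le Q(\phi)\le M|\phi|^2,
\]
so $|\phi^\infty|\le\sqrt{M/C}\,|\phi|$; in particular the first $N$ eigenfunctions of $Q$ concentrate on $\mathcal{H}_0$ as $C\to\infty$. Combining this with the min-max principle applied to projections $P_0 V$ of $k$-dimensional test subspaces (injective for $C$ large by the concentration bound) yields the matching lower bound $\lambda_k(Q)\ge\mu_k(1-O(1/\sqrt{C}))$ for $k\le N+1$, hence $|\lambda_j(q_1)-\lambda_j(q_0)|\le\alpha_4$ for $C$ large enough.

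Next I would show that $E_1$ is a graph over $E_0$ with small $\|B\|$. Injectivity of the orthogonal projection $\pi_{E_0}\colon E_1\to E_0$ comes from the outer gap $\delta$: a unit $\phi\in E_1\cap E_0^\perp$ would force $P_0\phi$ to be $\mathcal{H}_0$-orthogonal to $E_0$, so min-max inside $\mathcal{H}_0$ gives $Q_0(P_0\phi)\ge\mu_{N+1}(1-M/C)$, while $Q_0(P_0\phi)\le Q(\phi)\le\mu_N$, contradicting $\mu_{N+1}-\mu_N\ge\delta$ once $C>M\mu_{N+1}/\delta$. The quantitative decay $\|B\|\to 0$ is the hard part; the cleanest route is a spectral-projection perturbation argument, writing $P_{E_0}$ and $P_{E_1}$ as contour integrals of the resolvents of the forms $Q_0$ and $Q$ around the first $N$ eigenvalues, exploiting the coercivity on $\mathcal{H}_\infty$ to compare the resolvents, and using the internal gaps $\tilde\delta$ between distinct eigenvalues of $Q_0$ inside $[\mu_1,\mu_{N+1}]$ as the denominators in a Davis--Kahan type bound on $\|P_{E_1}-P_{E_0}\|$, from which $\|B\|\le\alpha_3$ for $C$ large. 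I expect this quantitative step to be the main obstacle.

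Finally the lower form inequality is essentially automatic: decomposing $Bx=\beta_0+\beta_\infty$ via $D(Q)=\mathcal{H}_0\oplus\mathcal{H}_\infty$, the $Q$-orthogonality of that decomposition together with the $Q_0$-orthogonality of $E_0$ against $E_0^{\perp_{\mathcal{H}_0}}$ yields
\[
q_1(x+Bx)=Q_0(x)+Q_0(\beta_0)+Q_\infty(\beta_\infty)\ge q_0(x),
\]
so the inequality holds with $\alpha_5=0$. Collecting all six estimates, Lemma~\ref{crit} applies for $C\ge C(\tilde\delta,M,N,\varepsilon)$ sufficiently large, delivering an $N$-spectral difference $\le\varepsilon$.
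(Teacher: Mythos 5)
Your framework is the same as the paper's: apply Lemma~\ref{crit} with $A_0=A_1=I$, with $E_0$, $E_1$ the $N$-eigenspaces of $Q_0$ and $Q$, and reduce to checking the remaining hypotheses of the criterion. The concentration bound $|\phi^\infty|\le\sqrt{M/C}\,|\phi|$, the min-max eigenvalue comparison, and the injectivity of the projection $E_1\to E_0$ via the gap all match the paper's opening moves (the paper phrases the last point as surjectivity of $P_m:E_1^m\to E_0^m$ and uses the internal gap $\tilde\delta$, not just the outer gap $\delta$, which matters for the later steps).

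However, there is a genuine gap: you do not actually prove $\|B\|\to 0$. You correctly identify it as the hard part and then only gesture at a resolvent/contour-integral Davis--Kahan argument, explicitly flagging it as ``the main obstacle.'' That is precisely the step the paper does prove, by an elementary but careful iterated projection scheme: it shows that each $\varphi_k^0$ lies close to the corresponding eigenspace of $Q_0$ by projecting successively onto the spans $E_0^i$ associated with each distinct eigenvalue of $Q_0$ below $\mu_{N+1}$, and at each stage the error is controlled by $\sqrt{M/C}\cdot M/\tilde\delta$. This is where the hypothesis on $\tilde\delta$ is actually consumed, and it is done entirely with quadratic-form estimates, not resolvents. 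Moreover, a resolvent comparison between $Q_0$ and $Q$ is delicate here because the two forms act on different spaces ($\mathcal{H}_0$ versus $D(Q)$), and the $Q$-orthogonal splitting $\mathcal H_0\oplus\mathcal H_\infty$ is \emph{not} assumed to be $\mathcal H$-orthogonal, so the associated self-adjoint operators do not block-decompose with respect to the ambient inner product; the Davis--Kahan machinery would need to be adapted rather than invoked.

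A second issue is your final step. You claim $q_1(x+Bx)\ge q_0(x)$ exactly (i.e.\ $\alpha_5=0$) from the identity
\[
q_1(x+Bx)=Q_0(x)+Q_0(\beta_0)+Q_\infty(\beta_\infty),
\]
but this requires the cross term $Q_0(x,\beta_0)$ to vanish, which in turn requires $\beta_0\perp E_0$ in $\mathcal H_0$. That orthogonality would follow if $\mathcal H_0\oplus\mathcal H_\infty$ were $\mathcal H$-orthogonal (so that $Bx\perp E_0$ forces $\beta_0\perp E_0$), but Theorem~\ref{ct1} only assumes $Q$-orthogonality, and in the paper's actual applications (e.g.\ Lemma~\ref{At}, where $\mathcal H_0$ consists of harmonic extensions and $\mathcal H_\infty$ of functions vanishing on the interface) the decomposition is not $L^2$-orthogonal. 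So the cross term need not vanish, and your derivation does not go through as written. The inequality can still be salvaged with a small $\alpha_5$ once one knows quantitatively that $\varphi_k^0$ is close to $\psi_k$ for each $k$ --- which is exactly the content of the paper's $\|B\|\to 0$ argument that you skipped --- but it is not automatic with $\alpha_5=0$.
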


Theorem \ref{ct1} is a slightly weaker version of  Theorem I.7 in \cite{CV2} (where we assume $\tilde\delta$-dependence of the constant $C$).  The proof of Theorem \ref{ct1} will  be included in the last section, which is different from that of Theorem I.7 in \cite{CV2}. In fact,
the proof of Theorem I.7 in \cite{CV2} is based on Proposition I.5 in the same paper, for which we find a counterexample (see Example \ref{couExmp}). Fortunately this weaker version is enough for our purpose below.

We also need Remark 18 in \cite{PJ}, which is a variant of Theorem I.8 in \cite{CV2}:
\begin{theorem}[\cite{CV2}, \cite{PJ}]\label{ct2}
Let $Q$ be a positive quadratic form with domain $D(Q)$ and $\langle\ ,\ \rangle$ be an inner product on $D(Q)$. Suppose there is a sequence of inner products $\langle\ ,\ \rangle_n$ on $D(Q)$ and a sequence of quadratic forms $Q_n$ on $D(Q)$ satisfying
 \begin{enumerate}
   \item for any $x\in D(Q)$,
   \[\lim_{n\to\infty}|x|_n=|x|, \quad \lim_{n\to\infty}Q_n(x)=Q(x), \quad Q(x)\leq Q_n(x),\]
   \item there exists $C_1,C_2,\varepsilon_n>0$ with $\lim_{n\to\infty}\varepsilon_n=0$ such that \[C_1|x|\leq|x|_n\leq C_2|x|+\varepsilon_nQ(x)^{\frac{1}{2}}, \quad \forall x\in D(Q).\]
 \end{enumerate}
If $Q$ satisfies the hypothesis (\ref{hoe}), then for any $\varepsilon>0$, there exists integer $K$ depending on $\delta,M,N,\varepsilon$ such that $Q$ and $Q_n$ have an $N$-spectral difference $\leq\varepsilon$ for all $n>K$.
\end{theorem}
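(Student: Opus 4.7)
The plan is to verify the five hypotheses of Lemma \ref{crit} with $E_0$ the $N$-eigenspace of $Q$ equipped with the ambient inner product $\langle\,,\,\rangle$ (so $A_0=I$), $E_1^{(n)}$ the $N$-eigenspace of $Q_n$ equipped with $\langle\,,\,\rangle_n$, $q_0=Q|_{E_0}$, and $q_1^{(n)}=Q_n|_{E_1^{(n)}}$. What must be checked, for $n$ sufficiently large, is: eigenvalue convergence $\lambda_j(Q_n)\to\lambda_j(Q)$ for $1\le j\le N$; $E_1^{(n)}$ is the graph of an operator $B_n:E_0\to E_0^\perp$ with $\|B_n\|$ small; the operator $A_1^{(n)}$ representing $\langle\,,\,\rangle_n$ on $E_1^{(n)}$ is close to $I$; $\|q_1^{(n)}\|\le M$; and the inequality $q_1^{(n)}(x+B_nx)\ge q_0(x)-\alpha|x|^2$.

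The central and most delicate step is $\lambda_k(Q_n)\to\lambda_k(Q)$ for $k\le N+1$. The upper bound $\limsup\lambda_k(Q_n)\le\lambda_k(Q)$ is immediate from min-max on the test space $F_k$ spanned by the first $k$ eigenfunctions of $Q$: by (1) the Rayleigh quotient $Q_n(\cdot)/|\cdot|_n^2$ converges pointwise, hence uniformly on the unit sphere of the finite-dimensional $F_k$, to $Q(\cdot)/|\cdot|^2$. The lower bound is the main obstacle. Normalize the $k$th eigenvector $u_n^{(k)}$ of $Q_n$ by $|u_n^{(k)}|_n=1$; condition (2) gives $|u_n^{(k)}|\le 1/C_1$, and the upper bound just obtained gives $Q(u_n^{(k)})\le Q_n(u_n^{(k)})=\lambda_k(Q_n)=O(1)$, so $\{u_n^{(k)}\}$ is bounded in the form-domain Hilbert space $\mathcal{H}_Q:=(D(Q),\,|\cdot|^2+Q)$. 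Since (\ref{hoe}) forces $Q$ to have discrete spectrum, the embedding $\mathcal{H}_Q\hookrightarrow\mathcal{H}$ is compact, so by a diagonal extraction, for each $k\le N+1$ a subsequence satisfies $u_n^{(k)}\to u^{(k)}$ strongly in $\mathcal{H}$ and weakly in $\mathcal{H}_Q$. The estimate
\[
|u_n^{(k)}-u^{(k)}|_n \le C_2\,|u_n^{(k)}-u^{(k)}| + \varepsilon_n\,Q(u_n^{(k)}-u^{(k)})^{1/2} \longrightarrow 0,
\]
combined with (1), gives $|u^{(k)}|=1$ and $\langle u^{(j)},u^{(k)}\rangle=\delta_{jk}$. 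For the lower bound, apply min-max to $V:=\mathrm{span}(u^{(1)},\ldots,u^{(k)})$: for $x=\sum c_j u^{(j)}\in V$ set $x_n=\sum c_j u_n^{(j)}$; the $\langle\,,\,\rangle_n$-orthonormality and the resulting $Q_n$-orthogonality of the eigenvectors yield $Q_n(x_n)=\sum c_j^2\lambda_j(Q_n)$, while weak lower-semicontinuity of $Q$ in $\mathcal{H}_Q$ combined with $Q\le Q_n$ gives $Q(x)\le\liminf Q_n(x_n)\le|x|^2\liminf\lambda_k(Q_n)$, so $\lambda_k(Q)\le\liminf\lambda_k(Q_n)$.

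A short trace argument identifies $V_N:=\mathrm{span}(u^{(1)},\ldots,u^{(N)})$ with $E_0$: on the one hand $\sum_{j\le N}Q(u^{(j)})\le\sum_{j\le N}\lambda_j(Q)$, on the other $\tr(Q|_{V_N})=\sum_{j\le N}Q(u^{(j)})\ge\sum_{j\le N}\lambda_j(Q)$ by min-max inside $V_N$, so $Q(u^{(j)})=\lambda_j(Q)$ for all $j\le N$, and the equality case of the variational characterization forces each $u^{(j)}$ into the $\lambda_j(Q)$-eigenspace; the gap in (\ref{hoe}) then gives $V_N=E_0$. Hence $E_1^{(n)}\to E_0$, providing the graph description $E_1^{(n)}=\{x+B_nx:x\in E_0\}$ with $\|B_n\|\to 0$ and, using the convergence of inner products from (1) on the finite-dimensional limit space, also $\|A_1^{(n)}-I\|\to 0$. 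The bound $\|q_1^{(n)}\|\le M$ is immediate from (\ref{hoe}) plus the eigenvalue convergence, while the last condition $q_1^{(n)}(x+B_nx)\ge q_0(x)$ (i.e.\ one may take $\alpha=0$) follows from $Q_n\ge Q$ together with the $Q$-orthogonality of $E_0$ and $E_0^\perp$ (since $E_0$ is spanned by eigenfunctions of $Q$), which forces $Q(x+B_nx)=Q(x)+Q(B_nx)\ge Q(x)$. Applying Lemma \ref{crit} then gives the desired $N$-spectral difference bound. The principal technical obstacle is thus the lower bound on eigenvalues, whose proof crucially depends on the compact embedding $\mathcal{H}_Q\hookrightarrow\mathcal{H}$ coming from the discrete-spectrum assumption in (\ref{hoe}).
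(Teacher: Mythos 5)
The paper does not actually prove this statement: it is quoted from Remark 18 of \cite{PJ}, which in turn refines Th\'eor\`eme I.8 of \cite{CV2}, so there is no in-paper argument to compare against. Your strategy---verify the hypotheses of Lemma~\ref{crit} with $E_0$ the $N$-eigenspace of $Q$ and $E_1^{(n)}$ that of $Q_n$, establishing eigenvalue convergence by a compactness and weak-lower-semicontinuity argument and then identifying the limit subspace with $E_0$ via a trace inequality---is the natural one and, modulo the two points below, it works.

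First, the parenthetical claim that hypothesis~(\ref{hoe}) ``forces $Q$ to have discrete spectrum'' is incorrect: (\ref{hoe}) only constrains the first $N{+}1$ eigenvalues and says nothing about the spectrum above $M$. What you actually need, and what is available, is the paper's standing convention that every quadratic form under consideration has discrete spectrum; it is this, not (\ref{hoe}), that yields compactness of the embedding $\mathcal H_Q\hookrightarrow\mathcal H$ on which your lower-bound argument rests.

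Second, the step $\|A_1^{(n)}-I\|\to 0$ is asserted but not justified by the reason you give. Pointwise convergence $|x|_n\to|x|$ from hypothesis~(1) does give uniform convergence on the unit sphere of the \emph{fixed} finite-dimensional space $E_0$, but $A_1^{(n)}$ acts on the \emph{moving} space $E_1^{(n)}$. Writing $x=y+B_n y$ with $y\in E_0$, you must estimate $|B_n y|_n$ and the cross term $\langle y,B_ny\rangle_n$, and hypothesis~(2) only yields
\[
|B_n y|_n\le C_2|B_n y|+\varepsilon_n Q(B_n y)^{1/2},
\]
whose second term is not controlled by $\|B_n\|$ alone. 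One needs the further observation that $Q(B_n y)\le Q(y+B_n y)\le Q_n(y+B_n y)\le\lambda_N(Q_n)\,|y+B_n y|_n^2=O(1)$, using the $Q$-orthogonality of $E_0$ and $E_0^\perp$, the inequality $Q\le Q_n$, and the already-proved eigenvalue convergence together with (\ref{hoe}). With that in hand (and $\varepsilon_n\to 0$, $\|B_n\|\to0$) the claim does follow, so the gap is in exposition rather than in substance; but as written, ``convergence of inner products on the finite-dimensional limit space'' does not by itself address the operator $A_1^{(n)}$ you need to control.
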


In most applications we use the $L^2$-inner product on Riemannian manifold and the associated quadratic form. More precisely, for any  compact Riemannian manifold  $(M,g)$, with or without boundary, we let
\[Q_g(f)=\int_M|\nabla_{g} f|_{g}^2\mathrm{d}V_{g}\]
be the quadratic form on $H^1(M, g)$ when $M$ has no boundary, or on $H_0^1(M, g)$ when $M$ has boundary.

In the following proofs, we will need to justify whether a function space is close enough to a certain $N$-eigenspace. Let $q$ be a positive quadratic form on $(\mathcal{H},\langle\ ,\ \rangle)$ with domain $\mathcal{D}$. Suppose $E_0$ and $E_1$ are subspaces of the same dimension of  $\mathcal{D}$, both equipped with the induced inner product. Denote
 \begin{equation}\label{toq}
 q_{E_1/E_0}=q|_{E_1}\circ U_{E_0,E_1},
 \end{equation}
where  $U_{E_0,E_1}$ is defined in (\ref{iso}).
Let $|x|_q=\big(|x|^2+q(x)\big)^{\frac{1}{2}}$ and $A$ be the self-adjoint operator associated with $q$. For any interval $I\subset\mathbb{R}$, denote $P_{q,I}$ be the spectral projection of $A$ on $I$. We need the following lemma for small eigenvalues, proved by B. Colbois and Y. Colin de Verdi{\`e}re in \cite{C-C}:

\begin{lemma}[\cite{C-C} Lemma I.1]\label{sl}
Fix $C_1>0$ and $N \in \mathbb N$. Suppose
 \begin{enumerate}
   \item $\dim(P_{q,[0,C_1]})\leq N$,
   \item There is an $N$ dimensional subspace $E_0$ of $\mathcal{D}$ and $\varepsilon>0$ such that
       \[|q(f,g)|\leq\varepsilon|f||g|_q, \qquad \forall f\in E_0, g\in\mathcal{D}.\]
 \end{enumerate}
Then there exists $\varepsilon_0(C_1,N)$, $C(C_1,N)>0$ such that if $\varepsilon\leq\varepsilon_0$, then
 \begin{enumerate}
   \item[$\mathrm{a)}$] If we denote the first $N$ eigenvalues of $A$ and $q|_{E_0}$ by $\lambda_1, \cdots, \lambda_N$ and $\mu_1,\cdots,\mu_N$ respectively, then $\lambda_N\leq C_1$, and   \[\mu_i-C\varepsilon^2\leq\lambda_i\leq\mu_i, \quad \forall 1\leq i\leq N.\]
   \item[$\mathrm{b)}$] For the subspace $E$ generated by the first $N$ eigenfunctions of $A$,
        \begin{equation*}
        \|q_{E/E_0}-q_{E_0}\|_\infty\leq C\varepsilon^{1+\frac{1}{N}}.
        \end{equation*}
 \end{enumerate}
\end{lemma}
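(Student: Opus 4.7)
The plan is to use hypothesis~(2) to show that $E_0$ is an ``approximate $N$-eigenspace'' of $A$, and then to convert this approximation into the stated spectral and quadratic-form bounds via the min-max principle and a near-orthogonal-projection argument. First, plugging $g=f$ into hypothesis~(2) gives $q(f)\le\varepsilon|f|\sqrt{|f|^2+q(f)}$ for every $f\in E_0$, whence $q(f)\le C_0\varepsilon|f|^2$ by elementary algebra. Using $E_0$ as a test subspace in the min-max principle, the first $N$ eigenvalues of $A$ obey $\lambda_i\le\mu_i\le C_0\varepsilon$; in particular $\lambda_N\le C_1$ once $\varepsilon\le\varepsilon_0$, and then hypothesis~(1) forces $\lambda_{N+1}>C_1$, so the $N$-eigenspace $E$ of $A$ is a well-defined $N$-dimensional subspace of $\mathcal{D}$.

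The core estimate localizes $E_0$ near $E$. Orthogonally decompose $f\in E_0$ as $f=f_E+f_E^\perp$ with $f_E\in E$; since $E$ is spanned by eigenvectors of $A$, $q(f_E,f_E^\perp)=0$, hence $q(f)=q(f_E)+q(f_E^\perp)$ and $q(f,f_E^\perp)=q(f_E^\perp)$. Applying hypothesis~(2) with $g=f_E^\perp$, together with the spectral lower bound $q(f_E^\perp)\ge\lambda_{N+1}|f_E^\perp|^2>C_1|f_E^\perp|^2$, gives after a one-line bootstrap
\[
|f_E^\perp|^2+q(f_E^\perp)\le C\varepsilon^2|f|^2.
\]
Consequently the orthogonal projection $\pi=P_E|_{E_0}\colon E_0\to E$ satisfies $|\pi(f)|^2\ge(1-C\varepsilon^2)|f|^2$ and is therefore an isomorphism.

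Part~(a) follows directly. The bound $\lambda_i\le\mu_i$ is min-max with test space $E_0$. Conversely, let $V_i^*\subset E$ be the span of the first $i$ eigenvectors of $A$ and take $V_0=\pi^{-1}(V_i^*)\subset E_0$ as an $i$-dimensional test subspace; for $f\in V_0$,
\[
q(f)=q(\pi(f))+q(f_E^\perp)\le\lambda_i|\pi(f)|^2+C\varepsilon^2|f|^2\le(\lambda_i+C\varepsilon^2)|f|^2,
\]
so min-max on $E_0$ yields $\mu_i\le\lambda_i+C\varepsilon^2$.

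For part~(b) I would realize the isometry $U_{E_0,E}$ in~(\ref{iso}) as the polar part of the graph map $\mathcal{B}\colon E_0\to E$ associated to the presentation of $E$ as a graph over $E_0$; by a symmetric version of the Step~2 estimate the projection $P_{E_0}|_E$ is also a near-isometry, so $\mathcal{B}^{\ast}\mathcal{B}-I=O(\varepsilon^2)$ and $U_{E_0,E}$ differs from $\mathcal{B}$ by $O(\varepsilon^2)$ in operator norm. Pulling back $q|_E$ and comparing with $q_{E_0}$ reduces, modulo an $O(\varepsilon^2)$ perturbation weighted by the norm $O(\varepsilon)$ of $q_{E_0}$ from Step~1, to the pointwise identity $q_{E_0}(f)-q|_E(\pi(f))=q(f_E^\perp)=O(\varepsilon^2)|f|^2$. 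I expect this last step to be the main obstacle: while the above argument produces an $O(\varepsilon^2)$ sup-norm bound in the presence of a spectral gap for $q_{E_0}$, recovering the dimension-dependent exponent $\varepsilon^{1+1/N}$ in the absence of such a gap requires a delicate interpolation between the trivial operator-norm bound $\|q_{E_0}\|_\infty=O(\varepsilon)$ and the eigenvalue-wise bound of part~(a), with the factor $1/N$ arising from controlling how densely the $\mu_i$ can cluster inside $[0,C_0\varepsilon]$.
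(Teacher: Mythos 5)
The paper does not prove Lemma~\ref{sl}; it cites it verbatim from Colbois--Colin de Verdi\`ere \cite{C-C}, so there is no internal proof to compare against, and I will assess your argument on its own merits.

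Your Steps~1--2 and the proof of part~(a) are correct. Plugging $g=f$ into hypothesis~(2) gives $q|_{E_0}(f)\le C_0\varepsilon|f|^2$, hence $\lambda_i\le\mu_i\le C_0\varepsilon$, and hypothesis~(1) forces $\lambda_{N+1}>C_1$ once $\varepsilon\le\varepsilon_0$. The $q$-orthogonal splitting $f=f_E+f_E^\perp$ is legitimate since $E\subset D(A)\subset\mathcal D$, and feeding $g=f_E^\perp$ into hypothesis~(2) together with the lower bound $q(f_E^\perp)>C_1|f_E^\perp|^2$ correctly bootstraps to $|f_E^\perp|^2+q(f_E^\perp)\le C\varepsilon^2|f|^2$. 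Testing $q|_{E_0}$ on $\pi^{-1}(V_i^*)$ then yields $\mu_i\le\lambda_i+C\varepsilon^2$. All of this is sound.

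The place you declare a gap --- part~(b) --- is in fact where no gap exists; your worry is self-imposed. You do not need an interpolation nor any spectral gap for $q|_{E_0}$ to reach the exponent $1+1/N$: a direct estimate already gives the \emph{stronger} bound $O(\varepsilon^2)$. On $E$ one has $|q(x,y)|\le\lambda_N|x|\,|y|$ with $\lambda_N\le C_0\varepsilon$, and from your own Step~2 both $|\mathcal{B}f-P_Ef|=|f_E^\perp+Bf|=O(\varepsilon)|f|$ and $\|U_{E_0,E}-\mathcal{B}\|=O(\varepsilon^2)$, so $|U_{E_0,E}f-P_Ef|=O(\varepsilon)|f|$. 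Therefore
\[
\bigl|q(U_{E_0,E}f)-q(P_Ef)\bigr|=\bigl|q\bigl(U_{E_0,E}f-P_Ef,\;U_{E_0,E}f+P_Ef\bigr)\bigr|\le\lambda_N\cdot O(\varepsilon)|f|\cdot O(1)|f|=O(\varepsilon^2)|f|^2,
\]
and combined with $q(f)-q(P_Ef)=q(f_E^\perp)=O(\varepsilon^2)|f|^2$ this gives $\|q_{E/E_0}-q_{E_0}\|_\infty=O(\varepsilon^2)\le C\varepsilon^{1+1/N}$. Incidentally, your write-up slips between $\mathcal{B}$ and $\pi=P_E|_{E_0}$: you prove $\|U_{E_0,E}-\mathcal{B}\|=O(\varepsilon^2)$ but then invoke the ``pointwise identity'' with $\pi$ in place of $\mathcal{B}$, and these two maps differ by $O(\varepsilon)$, not $O(\varepsilon^2)$. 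That discrepancy is harmless because the extra $O(\varepsilon)$ is absorbed by the factor $\lambda_N=O(\varepsilon)$, but it is likely what generated your (unfounded) concern about needing a finer argument to recover the stated exponent.
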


\subsection{Stable metrics}

We will frequently use the isomorphism constructed in (\ref{iso}). In order to simplify the description, we introduce the conception of stable metrics following \cite{JL96}.

\par Let $M,\ M'$ be two compact manifolds with piecewise smooth boundaries and $B$ be a closed ball in $\mathbb{R}^m$. Suppose we have two continuous maps
\[f:B\to\mathcal{M}(M),\ \ F:B\to\mathcal{M}(M')\]
where $\mathcal{M}(M)$ and $\mathcal{M}(M')$ are the space of smooth Riemannian metrics on $M$ and $M'$ respectively. For any $g\in\mathcal{M}(M)$, let $E_N(g)$ be the $N$-eigenspace of the Laplacian of $g$ with certain boundary conditions and $\mathcal{Q}_N(g)$ be the space of quadratic forms on $E_N(g)$. For a fixed Riemannian metric $g_0$ on $M$, we can construct an $L^2$-isometry
\[i_p: (E_N(g_0), \langle , \rangle_{g_0})\to (E_N(f(p)), \langle , \rangle_{f(p)})\]
by (\ref{iso}) which is continuous on $p\in B$. In the following proofs, we will also have an injective map
\[ U_p:H^1(M,f(p))\to H^1(M',F(p))\]
using which we can pull-back the inner product on $U_p(E_N(f(p)))$ to $E_N(f(p))$. By composing with  (\ref{iso}), we may construct an $L^2$-isometry
\[I_p^1:(E_N(f(p)), \langle , \rangle_{f(p)}) \stackrel{(\ref{iso})}{\to} (E_N(f(p)), U_p^*(\langle , \rangle_{F(p)})) \to  (U_p(E_N(f(p))), \langle , \rangle_{F(p)}),\]
Also (\ref{iso}) gives another $L^2$-isometry
\[I_p^2:(U_p(E_N(f(p))), \langle , \rangle_{F(p)})\to (E_N(F(p)), \langle , \rangle_{F(p)}).\]
Composing these three isometries, we get an $L^2$-isometry
\[I_p = I_p^2 \circ I_p^1 \circ i_p: (E_N(g_0), \langle , \rangle_{g_0}) \to (E_N(F(p)), \langle , \rangle_{F(p)}).\]

Using $i_p$ and $I_p$, one can transform the Dirichlet integrals on $E_N(f(p))$ and $E_N(F(p))$ to quadratic forms on $E_N(g_0)$, to get the following two continuous maps:
\[\Phi(f):B\to\mathcal{Q}_N(g_0),\ \Phi(f)(\varphi)=\int_M |\nabla_{f(p)}i_p(\varphi)|^2\mathrm{d}V_{f(p)},\]
\[\Phi(F):B\to\mathcal{Q}_N(g_0),\ \Phi(F)(\varphi)=\int_{M'} |\nabla_{F(p)} I_p(\varphi)|^2\mathrm{d}V_{F(p)}.\]
For simplicity, let $0$ be the center of $B$. Now we write down the definition of stable metrics:
\begin{defn}[\cite{JL96} Definition 2.1]
Let $f:B\to\mathcal{M}(M)$ be a continuous family of metrics  on $M$ with $f(0)=g_0$.
 If there exists an $\varepsilon=\varepsilon(f,g_0)>0$ such that
for any continuous  family of metrics $F:B\to\mathcal{M}(M')$ on $M'$ with \[\|\Phi(F)-\Phi(f)\|_{C^0(B)}<\varepsilon,\] there exists a point $p\in\mathrm{int}(B)$ with
 \[\Phi(F)(p)=\Phi(f)(0).\]
Then we say $f$ is  a \textbf{stable} family  around $g_0$, $g_0$ is a \textbf{stable metric}, and $F$ is \textbf{spectrally near} to $f$.
 \end{defn}

\section{Constructing graph with prescribed Dirichlet eigenvalues}\label{cog}

For any positive integer $N$, consider the finite graph $G_N=(V,E)$ which is obtained from the complete graph with $N$ vertices (as interior vertices) by adding $N$ boundary vertices, one to each interior vertex. More precisely,  the vertex set of $G_N$ is $V=\{v_1,\dots,v_N\}\cup\{u_1,\dots,u_N\}$, and the edge set of $G_N$ is
 \[E=\{(v_i,v_j),1\leq i<j\leq N\}\cup\{(v_k,u_k),1\leq k\leq N\}.\]

Note that we used different letters to distinguish the interior vertices and the boundary vertices. Let
 \begin{equation}\label{H}
 H=\{f:V\to\mathbb{R}|\ f(u_i)=0,\ \forall1\leq i\leq N\}.
 \end{equation}
For a given measure $\mu=\sum_{i=1}^N\mu_i\delta (v_i)$ ($\mu_i>0$) on $G_N$, we can define an inner product $\langle\ ,\ \rangle_\mu$ on $H$ via
 \begin{equation}\label{Gip}
 \langle f,g\rangle_\mu=\sum_{i=1}^N \mu_if(v_i)g(v_i),\ \forall f,g\in H.
 \end{equation}
For a given edge weight function
\[\Theta:E\to\mathbb{R}_{>0},\ (v_i,v_j)\mapsto \theta_{ij},\ (v_i,u_i)\mapsto\theta_i\]
(which will be viewed as a vector in $\mathbb{R}_{>0}^{|E|}$ in the following discussion),   define a quadratic form $q_\Theta$ on $H$,
 \begin{equation}
 q_\Theta(f)=\sum_{1\leq i<j\leq N}\theta_{ij}(f(v_i)-f(v_j))^2+\sum_{i=1}^N\theta_i f(v_i)^2.
 \end{equation}
Let $\Delta_\Theta$ be the combinatorial Dirichlet Laplacian associated with $(H,\mu,q_\Theta)$, namely
 \begin{equation}
 \Delta_\Theta(f)(v_i)=\frac{1}{\mu_i}\sum_{j\neq i}\theta_{ij}(f(v_i)-f(v_j))+\frac{\theta_i}{\mu_i}f(v_i)
 \end{equation}
where we set $\theta_{ij}=\theta_{ji}$ when $i>j$.

We are interested in the eigenvalues of $\Delta_\Theta$, which will be denoted as
 \begin{equation}
 0<\lambda_1^\Theta<\lambda_2^\Theta\leq\cdots\leq\lambda_N^\Theta.
 \end{equation}
It turns out that for any fixed $\mu$, one can prescribe these eigenvalues by choosing suitable edge wight function $\Theta$:
\begin{theorem}\label{CoG}
Fix any measure $\mu=\sum_{i=1}^N\mu_i\delta(v_i)$ on $G_N$. For any sequence $0<\lambda_1<\lambda_2\leq\cdots\leq\lambda_N$, there exists a function $\Theta:E\to\mathbb{R}_{>0}$ such that the eigenvalues of $\Delta_\Theta$ are precisely $\lambda_i$'s.
\end{theorem}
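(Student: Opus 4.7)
The plan is to first fix the boundary edge weights so that the constant interior function becomes the first Dirichlet eigenfunction with eigenvalue $\lambda_1$, and then to reduce the remaining problem to a classical inverse eigenvalue problem for the weighted combinatorial Laplacian on $K_N$. Setting $\theta_i := \mu_i \lambda_1$ for each $i$, one immediately has $\Delta_\Theta \mathbf{1}_V(v_i) = \theta_i/\mu_i = \lambda_1$ for the function $\mathbf{1}_V \in H$ that is $1$ on interior vertices (and $0$ on boundary vertices), independently of the interior weights $\theta_{ij}$. Hence $\mathbf{1}_V$ is an eigenfunction with eigenvalue $\lambda_1$ for every choice of $\theta_{ij}>0$.

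Next, I would decompose $H = \mathbb{R}\mathbf{1}_V \oplus \mathbf{1}_V^\perp$ orthogonally with respect to $\langle\cdot,\cdot\rangle_\mu$, and note that $q_\Theta(f) = q'_\Theta(f) + \lambda_1 \|f\|_\mu^2$ for all $f\in H$, where $q'_\Theta(f) = \sum_{i<j}\theta_{ij}(f(v_i)-f(v_j))^2$ is the Dirichlet form of the weighted combinatorial Laplacian $L'$ on $K_N$ with edge weights $\theta_{ij}$. Thus on $\mathbf{1}_V^\perp$ the eigenvalues of $q_\Theta$ equal $\lambda_1 + \nu_k$ for $k=2,\ldots,N$, where $\nu_2 \leq \cdots \leq \nu_N$ are the non-zero generalized eigenvalues of $L'$ with respect to $\mu$. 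Setting $\nu_k := \lambda_k - \lambda_1 > 0$, the problem reduces to constructing strictly positive edge weights $\theta_{ij}$ on $K_N$ such that the resulting weighted Laplacian has non-zero generalized spectrum $\{\nu_2, \ldots, \nu_N\}$.

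To solve this reduced problem, I would parametrize the symmetric matrix $\tilde L' := M^{-1/2} L' M^{-1/2}$ as $\tilde L' = \sum_{k=2}^N \nu_k \psi_k \psi_k^T$, where $\{\psi_1, \ldots, \psi_N\}$ is a Euclidean orthonormal basis of $\mathbb{R}^N$ with $\psi_1 := M^{1/2}\mathbf{1}/\|M^{1/2}\mathbf{1}\|$. This automatically guarantees $\tilde L' \psi_1 = 0$ (equivalently $L' \mathbf{1} = 0$) and the prescribed spectrum. The positivity of the edge weights $\theta_{ij} = -\sqrt{\mu_i \mu_j}\,\tilde L'_{ij}$ translates into the sign condition $\sum_{k=2}^N \nu_k (\psi_k)_i (\psi_k)_j < 0$ for every pair $i \neq j$. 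In the special case $\nu_2 = \cdots = \nu_N = \nu$ this is automatic for any orthonormal basis, since then $\tilde L' = \nu(I - \psi_1 \psi_1^T)$ and $(\psi_1)_i(\psi_1)_j > 0$ because $\psi_1$ has all positive entries. For general $\nu_k$'s, one produces the required basis by continuously deforming from the equal-weights configuration, adjusting the $\psi_k$'s to maintain the open sign condition.

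The main obstacle is precisely this last construction: showing that for any prescribed positive weights $\nu_2 \leq \cdots \leq \nu_N$ one can choose an orthonormal basis of $\psi_1^\perp$ making all $\binom{N}{2}$ off-diagonal entries of $\tilde L'$ strictly negative. The challenge is to control these sign conditions simultaneously, particularly when the $\nu_k$ are widely spread or have large multiplicity, where the simple continuation from the degenerate case requires genuine care. This is essentially the graph-theoretic core of the argument in \cite{CV}, here adapted to the $\mu$-weighted Dirichlet setting; once this sub-construction is achieved, the three steps combine to give the claimed $\Theta$.
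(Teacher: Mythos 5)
Your reduction coincides exactly with the paper's: set $\theta_i := \lambda_1\mu_i$, so that $\Delta_\Theta = \Delta_{\Theta'} + \lambda_1\,\mathrm{Id}$ where $\Delta_{\Theta'}$ is the weighted combinatorial Laplacian on the complete subgraph $\Gamma_N = K_N$, and the problem becomes: find strictly positive edge weights $\theta_{ij}$ on $K_N$ whose $\mu$-weighted Laplacian has spectrum $\{0, \lambda_2 - \lambda_1, \ldots, \lambda_N - \lambda_1\}$. At that point the paper simply invokes Theorem~2 of Colin de Verdi\`ere's transversality paper \cite{CV1}, which is precisely the inverse eigenvalue theorem for $K_N$ with a prescribed positive measure, and the proof ends.

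Where your proposal diverges is in trying to re-derive that cited theorem, and the sketch you give has a genuine gap. You parametrize the admissible operators by orthonormal bases $\{\psi_2,\dots,\psi_N\}$ of $\psi_1^\perp$ and observe that the positivity of the $\theta_{ij}$ is equivalent to the strict sign conditions $\sum_{k\ge 2}\nu_k(\psi_k)_i(\psi_k)_j<0$ for all $i\neq j$, which indeed hold for the degenerate spectrum $\nu_2=\cdots=\nu_N$. But ``continuously deforming from the equal-weights configuration while maintaining the open sign condition'' is not by itself an argument: the sign conditions cut out an open subset of the orthogonal group $O(\psi_1^\perp)$ whose shape depends on $(\nu_2,\dots,\nu_N)$, and openness does not guarantee non-emptiness for every target spectrum, nor does it produce a continuous selection of bases along a path in spectrum space. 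One must either prove a connectivity/transversality statement (which is the content of \cite{CV1}) or give an explicit construction; a bare deformation heuristic can fail precisely when the $\nu_k$ spread out or degenerate in the ways you flag as ``requiring genuine care.'' Since that is exactly the theorem the paper cites, the cleanest fix is to replace your third step with a citation to Theorem~2 in \cite{CV1} (applied to the complete graph with the fixed measure $\mu$), after which your first two steps complete the proof.
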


\begin{proof} Let $\Gamma_N$ be the subgraph of $G_N$ with vertices $\{v_1,\cdots,v_N\}$. By Theorem 2 in \cite{CV1}, there exists a function
 \begin{equation*}
 \Theta':\{(v_i,v_j),1\leq i<j\leq N\}\to\mathbb{R}_{>0},\ (v_i,v_j)\mapsto \theta'_{ij}
 \end{equation*}
such that the associated combinatorial Laplacian $\Delta_{\Theta'}$ on $\Gamma_N$ defined by
 \begin{equation*}
 \Delta_{\Theta'}(f)(v_i)=\frac{1}{\mu_i}\sum_{j\neq i}\theta'_{ij}(f(v_i)-f(v_j))
 \end{equation*}
has eigenvalues $\{0,\lambda_2-\lambda_1,\cdots,\lambda_N-\lambda_1\}$. One can check that  \[\Theta:E\to\mathbb{R}_{>0},  \quad \theta_{ij}=\theta'_{ij},\; \theta_i=\lambda_1\mu_i\]
is  what we want.
\end{proof}

\section{Constructing surface with prescribed Dirichlet spectrum}\label{sec4}

In this section, we will prove that for any increasing sequence
\[0<a_1<a_2\leq a_3\leq\cdots\leq a_N,\]
there exists a metric $g$ on the surface
\[X_N=\Sigma_{\frac{N(N-3)}{2}+1}-\{D_1,\cdots,D_N\}\]
(i.e. the surface of genus $\frac{N(N-3)}{2}+1$ with $N$ punctures) such that the first $N$ Dirichlet eigenvalues of $(X_N, g)$ are $a_i$'s.

\subsection{Capacitors}

First we recall the definition of capacitor from \cite{C-C} and do some calculations that will be used later.

\par Let $\overline{X}$ be a compact Riemannian manifold of dimension $n$, $X$ be its interior. Suppose we have a partition of the boundary $\partial X=C_+\cup C_-$ where $C_+$ and $C_-$ are two disjoint manifolds of dimensional $n-1$. Consider the \textbf{capacitor} $\mathcal{C}=(X,C_+,C_-)$.
The \textbf{capacity} $\mathrm{cap}(\mathcal{C})$ of the capacitor $\mathcal{C}=(X,C_+,C_-)$ is defined to be
 \begin{equation*}
 \mathrm{cap}(\mathcal{C}):=\inf\{\int_X|\nabla f|^2:\ f\in H^1(\overline{X})\ f|_{C_+}=1\ f|_{C_-}=0\}.
 \end{equation*}
Then there exists a unique function $f_0\in H^1(\overline{X})$ with $f_0|_{C^+}=1\ f_0|_{C^-}=0$ such that
 \begin{equation*}
 \mathrm{cap}(\mathcal{C})=\int_X|\nabla f_0|^2.
 \end{equation*}
The function  $f_0$ is known as the \textbf{equilibrium potential} of the capacitor $\mathcal{C}$.
For any $a, b \in \mathbb R$, we will denote
\[f_{ab}=(a-b)f_0+b.\]

For any fixed $l>0$ and any $\varepsilon>0$, let
\[a^\varepsilon=\arccosh(\frac{l}{\pi\varepsilon}).\]
In \cite{C-C}, B. Colbois and Y. Colin de Verdi\`{e}re studied the capacity of the  capacitor $\mathcal{C^\varepsilon}=(Y^\varepsilon,C^\varepsilon_+,C^\varepsilon_-)$, where   \[Y^\varepsilon=[-a^\varepsilon,a^\varepsilon]\times\mathbb{R}/\mathbb{Z}\]
is a cylinder equipped  with hyperbolic metric
 \[g^\varepsilon=\mathrm{d}x^2+(\pi\varepsilon\cosh x)^2\mathrm{d}\theta^2,\]
and $C_{\pm}^\varepsilon=\{\pm a^\varepsilon\}\times\mathbb{R}/\mathbb{Z}$. They proved
\begin{proposition}[\label{CC3.3}\cite{C-C}, Prop \uppercase\expandafter{\romannumeral3}.3]
For fixed $l>0$ and any $\varepsilon>0$, let $f_0^\varepsilon$ be the equilibrium potential of the capacitor $\mathcal{C^\varepsilon}=(Y^\varepsilon,C^\varepsilon_+,C^\varepsilon_-)$, then
 \begin{enumerate}[(1)]
   \item $\lim_{\varepsilon\to0}\frac{\mathrm{cap}(\mathcal{C^\varepsilon})}
       {\varepsilon}=1$,
   \item $\forall a,b\in\mathbb{R}$, $\lim_{\varepsilon\to0}\int_{Y^\varepsilon}|f^\varepsilon_{ab}|^2
       =l(a^2+b^2)$,
   \item For the unit outward normal vector field  $\nu$ along the boundary $\partial Y^\varepsilon$,
        \begin{equation*}
        \|\frac{\partial f_0^\varepsilon}{\partial \nu}\|_{L^2(C^\varepsilon_\pm)}=O(\varepsilon),\ \text{as}\ \varepsilon\to0,
        \end{equation*}
   \item The trace maps $\Tr^\varepsilon_{\pm}:H^1(Y^\varepsilon)\to L^2(C_\pm^\varepsilon)$ are uniformly bounded by norm for $\varepsilon$ close to 0.
 \end{enumerate}
\end{proposition}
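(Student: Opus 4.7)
The plan exploits the rotational symmetry of $g^\varepsilon$ in the $\theta$-direction. Since the boundary data of $f_0^\varepsilon$ are $\theta$-independent, uniqueness of the Dirichlet minimizer forces $f_0^\varepsilon$ to depend on $x$ alone. The harmonic equation $\Delta_{g^\varepsilon} f_0^\varepsilon = 0$ then reduces to the ODE $(\cosh x \cdot (f_0^\varepsilon)')' = 0$ on $[-a^\varepsilon, a^\varepsilon]$, with explicit solution
\[
f_0^\varepsilon(x) = \frac{1}{2} + \frac{F(x)}{2F(a^\varepsilon)}, \qquad F(x) := \int_0^x \frac{dt}{\cosh t}.
\]
Two asymptotics will drive every computation: $F$ is odd with $F(a^\varepsilon)\to \pi/2$; and since $\cosh a^\varepsilon = l/(\pi\varepsilon)$, one has $\pi\varepsilon\sinh a^\varepsilon = \sqrt{l^2 - (\pi\varepsilon)^2}\to l$.

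With this explicit formula in hand, parts (1)--(3) reduce to direct calculations. For (1), using $F'(x) = 1/\cosh x$ yields
\[
\mathrm{cap}(\mathcal{C}^\varepsilon) = \int_{Y^\varepsilon}|(f_0^\varepsilon)'|^2\,\pi\varepsilon\cosh x\,dx\,d\theta = \frac{\pi\varepsilon}{2F(a^\varepsilon)},
\]
whose ratio with $\varepsilon$ tends to $1$. For (3), the outward unit normal along $C_\pm^\varepsilon$ is $\pm\partial_x$, so $\partial f_0^\varepsilon/\partial\nu$ equals the constant $\pm\pi\varepsilon/(2l\,F(a^\varepsilon))$ on a circle of length $l$, giving $L^2$-norm $O(\varepsilon)$. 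For (2), decompose $f_{ab}^\varepsilon = \frac{a+b}{2} + \frac{a-b}{2F(a^\varepsilon)}F(x)$. The cross term in $|f_{ab}^\varepsilon|^2$ integrates to zero by parity; the constant term contributes $\frac{(a+b)^2}{2}\pi\varepsilon\sinh a^\varepsilon \to \frac{(a+b)^2}{2}l$; the $F(x)^2$ term, treated by integration by parts via $F'\cosh x = 1$, has leading contribution $\frac{(a-b)^2}{2}l$ plus a remainder of order $\varepsilon a^\varepsilon = O(\varepsilon\log(1/\varepsilon))$ that vanishes. Summing gives $l(a^2+b^2)$.

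The main obstacle is (4): a trace estimate uniform in $\varepsilon$ despite the domain $Y^\varepsilon$ varying. My plan is to translate via $y = x - a^\varepsilon$ on a fixed collar $[-1,0]\times \mathbb{R}/\mathbb{Z}$. On this collar the pulled-back metric
\[
dy^2 + \bigl(\pi\varepsilon\cosh(a^\varepsilon+y)\bigr)^2 d\theta^2 = dy^2 + \bigl(l\cosh y + \pi\varepsilon\sinh a^\varepsilon \cdot \sinh y\bigr)^2 d\theta^2
\]
converges uniformly in $C^\infty$ to the fixed cusp metric $dy^2 + (le^y)^2 d\theta^2$ as $\varepsilon \to 0$. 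The standard $H^1\to L^2$ trace inequality on this limit cylinder, transferred back via uniform metric equivalence for small $\varepsilon$, yields a trace bound whose constant depends only on $l$. Since $\|f\|_{H^1(\text{collar})}\leq \|f\|_{H^1(Y^\varepsilon)}$, this produces uniform boundedness of $\Tr_+^\varepsilon$; the argument for $\Tr_-^\varepsilon$ is identical by symmetry.
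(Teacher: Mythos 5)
Your proposal is correct and follows the same explicit route the paper takes for the parallel half-cylinder statement, Proposition \ref{SCC3.3} (which the authors note is proved ``very similarly'' to Prop.\ III.3 of \cite{C-C}): reduce to the ODE $(\cosh x\,f')'=0$, solve via the Gudermannian $F(x)=\int_0^x dt/\cosh t=\arcsin(\tanh x)$, and compute each limit directly, using $F(a^\varepsilon)\to\pi/2$ and $\pi\varepsilon\sinh a^\varepsilon=\sqrt{l^2-(\pi\varepsilon)^2}\to l$. Your parity observation kills the cross term in (2), which is a clean shortcut specific to the symmetric full cylinder, and your collar argument for (4) — translating to $[-1,0]\times\mathbb R/\mathbb Z$, where the pulled-back metrics converge in $C^\infty$ to the fixed cusp metric $dy^2+(le^y)^2d\theta^2$ and are uniformly bi-Lipschitz to it — is a sound way to get the uniform trace bound; all constants and asymptotics check out.
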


To handle the Dirichlet boundary, we also need to study the analogous capacity for the ``half hyperbolic cylinder", namely
\[Z^\varepsilon=[-a^\varepsilon,0]\times\mathbb{R}/\mathbb{Z}\] equipped with the hyperbolic metric $g^\varepsilon$ given above:
\begin{proposition}\label{SCC3.3}
For fixed $l>0$ and any $\varepsilon>0$. Let $f_0^\varepsilon$ be the equilibrium potential of the capacitor $\mathcal{C}_1^\varepsilon=(Z^\varepsilon,C_+^\varepsilon,C_-^\varepsilon)$, where   $C^\varepsilon_+=\{-a^\varepsilon\}\times\mathbb{R}/\mathbb{Z}$, $C_-^\varepsilon=\{0\}\times\mathbb{R}/\mathbb{Z}$, then
 \begin{enumerate}[(1)]
   \item $\lim_{\varepsilon\to0}\frac{\mathrm{cap}(\mathcal{C}^\varepsilon
       _1)}{\varepsilon}=2$,
   \item $\forall a\in\mathbb{R}$, $\lim_{\varepsilon\to0}\int_{Z^\varepsilon}|f^\varepsilon_{a0}|^2
       =la^2$,
   \item For the unit outward normal vector field $\nu$ along the boundary $C_+^\varepsilon$,
        \begin{equation*}
        \|\frac{\partial f_0^\varepsilon}{\partial\nu}\|_{L^2(C_+^\varepsilon)}=O(
        \varepsilon),\ \text{as }\varepsilon\to0,
        \end{equation*}
   \item The trace map $\Tr^\varepsilon:H^1(Z^\varepsilon)\to L^2(C_+^\varepsilon)$ is uniformly bounded by norm for $\varepsilon$ close to 0.
 \end{enumerate}
\end{proposition}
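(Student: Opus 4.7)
The plan is to deduce every claim of Proposition \ref{SCC3.3} from the corresponding statement of Proposition \ref{CC3.3} by exploiting the reflection symmetry $x\mapsto -x$ of the cylinder $Y^\varepsilon$ and viewing $Z^\varepsilon$ as one of its two symmetric halves. In particular I will never redo any capacity estimate from scratch; every quantity on $Z^\varepsilon$ will be rewritten in terms of the equilibrium potential $f_0^\varepsilon$ of the full capacitor $\mathcal{C}^\varepsilon$.

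First I would identify the equilibrium potential of $\mathcal{C}_1^\varepsilon$ explicitly. Since the metric $g^\varepsilon$ and the boundary data of $\mathcal{C}^\varepsilon$ are invariant under $x\mapsto -x$ and under translations of the $\theta$-coordinate, uniqueness of the minimizer forces $f_0^\varepsilon$ to depend on $x$ alone and to satisfy $f_0^\varepsilon(x)+f_0^\varepsilon(-x)=1$; in particular $f_0^\varepsilon(0)=\tfrac{1}{2}$. Hence $\tilde f_0^\varepsilon := (2f_0^\varepsilon-1)|_{Z^\varepsilon}$ is harmonic on $Z^\varepsilon$, takes the value $1$ on $C_+^\varepsilon$ and $0$ on $C_-^\varepsilon$, and is therefore the equilibrium potential of $\mathcal{C}_1^\varepsilon$.

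With this identification the first three claims amount to bookkeeping of factors of $2$. Because $f_0^\varepsilon(x)+f_0^\varepsilon(-x)=1$, both $|\nabla f_0^\varepsilon|^2$ and $|2f_0^\varepsilon-1|^2$ are even functions of $x$, and so is the volume form of $g^\varepsilon$; thus $\int_{Y^\varepsilon} = 2\int_{Z^\varepsilon}$ for either integrand. Combined with $|\nabla \tilde f_0^\varepsilon|^2 = 4|\nabla f_0^\varepsilon|^2$, this yields $\mathrm{cap}(\mathcal{C}_1^\varepsilon) = 2\,\mathrm{cap}(\mathcal{C}^\varepsilon)$ and hence (1) via Proposition \ref{CC3.3}(1). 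For (2), write $f_{a0}^\varepsilon = a\,\tilde f_0^\varepsilon = a(2f_0^\varepsilon-1)|_{Z^\varepsilon}$; since $2f_0^\varepsilon-1$ is precisely the function $f_{1,-1}^\varepsilon$ on $Y^\varepsilon$, Proposition \ref{CC3.3}(2) with $(a,b)=(1,-1)$ together with the symmetric halving gives $\int_{Z^\varepsilon}|f_{a0}^\varepsilon|^2 \to la^2$. For (3), the outward normal to $Z^\varepsilon$ along $C_+^\varepsilon$ agrees with the outward normal of $Y^\varepsilon$ at the same boundary component, so $\partial\tilde f_0^\varepsilon/\partial\nu = 2\,\partial f_0^\varepsilon/\partial\nu$ pointwise and the $L^2(C_+^\varepsilon)$-bound follows at once from Proposition \ref{CC3.3}(3).

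For the trace estimate (4), I would pass from $Z^\varepsilon$ to $Y^\varepsilon$ by even reflection: given $f\in H^1(Z^\varepsilon)$, set $\tilde f(x,\theta) := f(-|x|,\theta)$, so that $\tilde f\in H^1(Y^\varepsilon)$ with $\|\tilde f\|_{H^1(Y^\varepsilon)}^2 = 2\|f\|_{H^1(Z^\varepsilon)}^2$. Since $\tilde f$ and $f$ agree on $C_+^\varepsilon$, Proposition \ref{CC3.3}(4) applied to $\tilde f$ gives the uniform bound on $\Tr^\varepsilon$. I do not anticipate any serious obstacle in this proof: every statement reduces mechanically to the corresponding one on $Y^\varepsilon$, and the only mild care required is justifying the $H^1$-regularity of the even reflection, which is a routine density argument.
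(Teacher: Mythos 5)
Your proposal is correct and takes a genuinely different route from the paper. The paper re-solves the ODE for the equilibrium potential on the half-cylinder explicitly, finding $f_0^\varepsilon=\delta^\varepsilon\arcsin(\tanh x)$ with $\delta^\varepsilon=-(\arcsin(\tanh a^\varepsilon))^{-1}$, computes (1) by a direct integral, and dismisses (2)--(4) as ``similar to \cite{C-C}''. You instead deduce all four parts from the \emph{statements} of Proposition \ref{CC3.3} via the reflection symmetry $x\mapsto-x$, never redoing any integral. Your route is cleaner and actually supplies proofs of (2)--(4) rather than deferring them; the paper's route is more self-contained in that it does not lean on the precise form of Proposition \ref{CC3.3}'s conclusions. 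The reflection identity $f_0^\varepsilon(x)+f_0^\varepsilon(-x)=1$ is correct (apply $x\mapsto-x$ and $f\mapsto 1-f$ and invoke uniqueness), the evenness of $|\nabla f_0^\varepsilon|^2$, of $(2f_0^\varepsilon-1)^2$, and of the volume density $\pi\varepsilon\cosh x\,\mathrm{d}x\,\mathrm{d}\theta$ justifies $\int_{Y^\varepsilon}=2\int_{Z^\varepsilon}$, and the even-reflection trick for (4) works because $x\mapsto-x$ is an isometry of $g^\varepsilon$.

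One small slip: with the paper's boundary labels ($f_0^\varepsilon|_{C_+^\varepsilon}=1$ on $\{a^\varepsilon\}$, hence $f_0^\varepsilon(-a^\varepsilon)=0$), the restriction $(2f_0^\varepsilon-1)|_{Z^\varepsilon}$ equals $-1$ at $C_+^\varepsilon=\{-a^\varepsilon\}$ and $0$ at $C_-^\varepsilon=\{0\}$; the equilibrium potential of $\mathcal{C}_1^\varepsilon$ is therefore $1-2f_0^\varepsilon$, not $2f_0^\varepsilon-1$. This sign is irrelevant to every quantity you subsequently use (gradient square, $L^2$ norm, normal-derivative $L^2$ norm), so the rest of the argument goes through unchanged, but the sentence claiming $2f_0^\varepsilon-1$ ``takes the value $1$ on $C_+^\varepsilon$'' should be fixed.
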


\begin{proof}
The proof is very similar to that of Prop \uppercase\expandafter{\romannumeral3}.3 in \cite{C-C}. The only difference is that   the equilibrium potential $f_0^\varepsilon$ of the current capacitor $\mathcal{C}_1^\varepsilon$ solves the equation
\[\left\{\begin{array}{l}f''(x)+(\tanh x)f'(x)=0, \\
f(-a^\varepsilon)=1,  f(0)=0\end{array}\right.\]
and the solution is
\[f_0^\varepsilon=\delta^\varepsilon\arcsin(\tanh x),\]
where $\delta^\varepsilon=-(\arcsin(\tanh a^\varepsilon))^{-1}$ and $\lim_{\varepsilon\to0}\delta^\varepsilon=-\frac{2}{\pi}$. It follows
 \begin{equation*}
 \begin{aligned}
 \lim_{\varepsilon\to 0}\frac{\mathrm{cap}(\mathcal{C}_1^\varepsilon)}{\varepsilon}
 &= \lim_{\varepsilon\to0}\frac{1}{\varepsilon}\int_0^1
 \int_{-a^\varepsilon}^0\big(\delta^\varepsilon(\cosh x)^{-1}\big)^2\pi\varepsilon\cosh x\mathrm{d}x\mathrm{d}\theta\\
 &= \lim_{\varepsilon\to0}(\delta^\varepsilon)^2\pi\arcsin(\tanh a^\varepsilon)=2,
 \end{aligned}
 \end{equation*}
which proves (1). The proofs of (2), (3) and (4) are similar to those  in \cite{C-C}  and will be omitted.
\end{proof}

\subsection{Existence of the desired metric}\label{CoS}

Next we   prove the existence of a desired metric on the surface $X_N=\Sigma_{\frac{N(N-3)}{2}+1}-\{D_1,\cdots,D_N\}$.

Firstly, we fix the measure
\[\mu_0=\sum_{i=1}^N2\pi(N-2)\delta(v_i)\] on the graph $G_N=(V,E)$ described in Section \ref{cog}. For any edge weight function $\Theta:E\to\mathbb{R}_{>0}$, $(v_i,v_j)\mapsto\theta_{ij}$, $(v_i,u_i)\mapsto\theta_i$, we shall construct a family of Riemannian metrics $\{g^{\varepsilon}_{\Theta}\}_{\varepsilon>0}$ with constant curvature $ -1$ on $X_N$. For simplicity we denote $(X_N, g^{\varepsilon}_{\Theta})$ by $X^{\varepsilon}_{\Theta}$.

For each $1\leq i\leq N$, let $X^\varepsilon_{\Theta;i}$ be the surface constructed by gluing $N-2$ hyperbolic pants shown in the graph below:\\
\includegraphics[scale=.7]{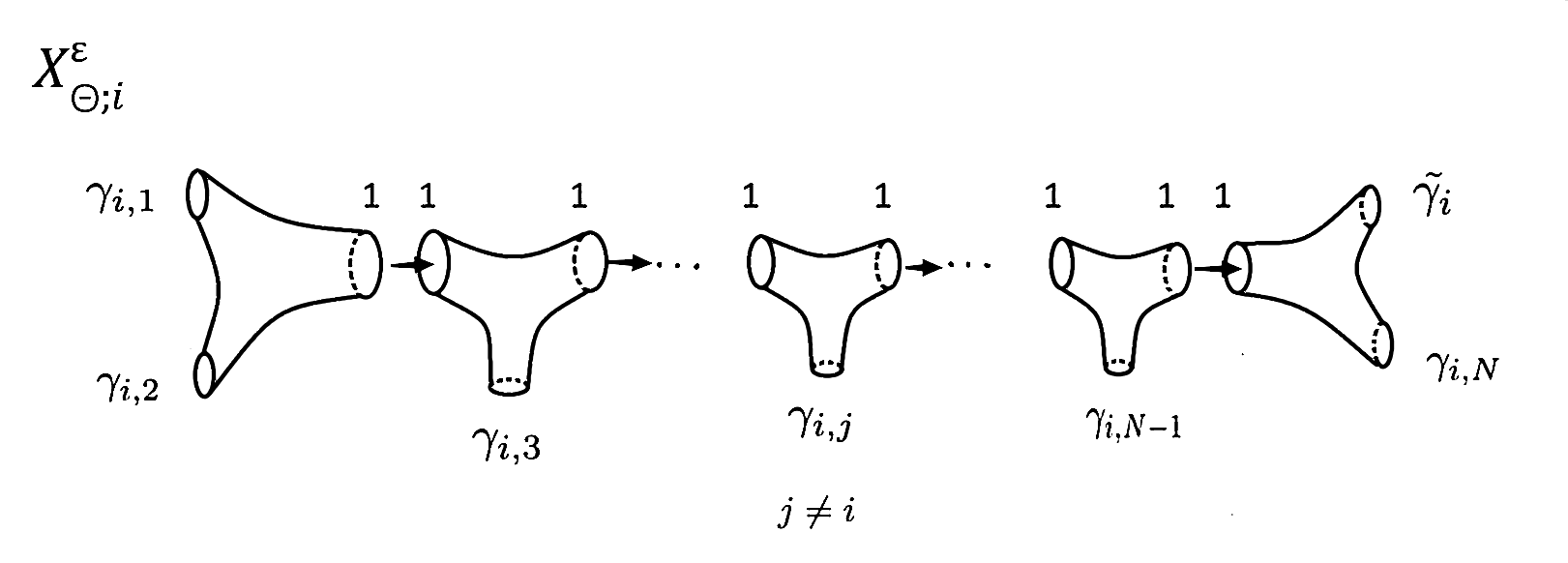}\\
\noindent
More precisely, we denote $P_{\gamma_1,\gamma_2,\gamma_3}$ to be a hyperbolic pant whose boundary geodesics are $\gamma_1, \gamma_2$ and $\gamma_3$. In our construction, these $\gamma_i$'s will be one of the following three cases,
\begin{itemize}
  \item   a geodesic of length $\pi\varepsilon\theta_{ij}$, which will be denoted by $\gamma_{i,j}$,
  \item   a geodesic of length 1, which will simply denoted by 1
  \item   a  geodesic of length $\frac{\pi\varepsilon}{2}\theta_i$, which will be denoted by $\tilde{\gamma_i}$.
\end{itemize}
We glue such pants as follows:
\begin{itemize}
  \item the pant $P_{\gamma_{i,1},\gamma_{i,2},1}$ is glued to the pant $P_{1,\gamma_{i,3},1}$ along the adjacent geodesic of length 1 without twist,
  \item the pant $P_{1,\gamma_{i,j-1},1}$ is glued to the pant $P_{1,\gamma_{i,j},1}$ along the adjacent geodesic of length 1 without twist,
  \item the pant $P_{1,\gamma_{i,N-1},1}$ is glued to the pant $P_{1,\gamma_{i,N},\tilde{\gamma_i}}$ along the adjacent geodesic of length 1 without twist.
\end{itemize}
The resulting surface is denoted by $X^\varepsilon_{\Theta;i}$.

Finally we glue all these $X^\varepsilon_{\Theta;i}$, $1 \le i \le N$, to get the desired surface $X^{\varepsilon}_{\Theta}$: the surfaces $X_{\Theta;i}^\varepsilon$ and $X_{\Theta;j}^\varepsilon$ are glued along geodesics $\gamma_{i,j}$ and $\gamma_{j,i}$ without twist. Topologically the  surface $X^{\varepsilon}_{\Theta}$ is $X_N$, with boundary $\tilde{\gamma_i}$, $1 \le i \le N$. We refer to \cite{PB} for the detail of such gluing constructions. In particular, there exist  a tubular neighborhood $Y^\varepsilon_{ij}$ of $\gamma_{i,j}$ in $X^{\varepsilon}_{\Theta}$ of width
\[a^\varepsilon_{ij}=\arccosh (\frac{1}{\pi\varepsilon\theta_{ij}}),\]
i.e. $Y^\varepsilon_{ij}=[-a^\varepsilon_{ij},a^\varepsilon
_{ij}]\times\mathbb{R}/\mathbb{Z}$ with metric
\[\mathrm{d}x^2+(\pi\varepsilon\theta_{ij}\cosh x)^2\mathrm{d}\theta^2\]
and a collar neighborhood $Z_i^\varepsilon$ of $\tilde{\gamma_i}$ in $X^{\varepsilon}_{\Theta}$ of width \[b_i^\varepsilon=\arccosh(\frac{2}{\pi\varepsilon\theta_i}),\]
i.e. $Z^\varepsilon_i=[-b^\varepsilon_i,0]\times\mathbb{R}
/\mathbb{Z}$ with metric
\[\mathrm{d}x^2+(\frac{\pi\varepsilon}{2}\theta_i\cosh x)^2\mathrm{d}\theta^2.\]
Remove  these cylinder regions from $X_{\Theta;i}^\varepsilon$ and denote the remaining part by $\widehat{X^\varepsilon_{\Theta;i}}$, namely
\[\widehat{X^\varepsilon_{\Theta;i}}=X_{\Theta;i}^\varepsilon\setminus \big((\cup_{j\neq i}Y^\varepsilon_{ij})\cup Z^\varepsilon_i\big),\]
and let $E_0^\varepsilon$ be the space of functions $f\in H_0^1(X^{\varepsilon}_{\Theta})$ such that $f|_{\widehat{X^\varepsilon_{\Theta;i}}}$ is constant and $f$ is harmonic on $Y^\varepsilon_{ij}$ and $Z^\varepsilon_i$.  
 %
By Proposition \ref{CC3.3} and \ref{SCC3.3} with $l=1$, it is easy to get
\begin{proposition}\label{cp}
Fix $x_1, \cdots, x_N$. For any $f_\varepsilon\in E_0^\varepsilon$ with $f_\varepsilon|_{\widehat{X_{\Theta;i}^\varepsilon}}=x_i$, we have
 \begin{equation}
 \lim_{\varepsilon\to0}\frac{\int_{X^\varepsilon_\Theta}f_\varepsilon^2}
 {\sum_{i=1}^N2\pi(N-2)x_i^2}=1
 \end{equation}
and
 \begin{equation}
 \lim_{\varepsilon\to0}\frac{\int_{X_\Theta^\varepsilon}|
 \nabla f_\varepsilon|^2}{\varepsilon\cdot(\sum_{1\leq i<j\leq N}\theta_{ij}(x_i-x_j)^2+\sum_{i=1}^N\theta_ix_i^2)}=1
 \end{equation}
\end{proposition}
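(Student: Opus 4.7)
The plan is to decompose $X^\varepsilon_\Theta$ into the thick pieces $\widehat{X^\varepsilon_{\Theta;i}}$ (on which $f_\varepsilon\equiv x_i$) and the thin cylindrical pieces $\{Y^\varepsilon_{ij}\}_{i<j}$ and $\{Z^\varepsilon_i\}_i$ (on which $f_\varepsilon$ is the harmonic extension of the prescribed boundary data), evaluate the $L^2$ and Dirichlet integrals on each piece asymptotically as $\varepsilon\to 0$, and then assemble. On each thick piece the Dirichlet integral vanishes identically and the $L^2$ integral equals $x_i^2\,\mathrm{Area}(\widehat{X^\varepsilon_{\Theta;i}})$. By Gauss--Bonnet each of the $N-2$ hyperbolic pants composing $X^\varepsilon_{\Theta;i}$ has area $2\pi$, so $\mathrm{Area}(X^\varepsilon_{\Theta;i})=2\pi(N-2)$ independently of $\varepsilon$. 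A direct computation from the explicit collar metrics (using $\cosh a^\varepsilon_{ij}=1/(\pi\varepsilon\theta_{ij})$ and $\cosh b^\varepsilon_i=2/(\pi\varepsilon\theta_i)$) yields $\mathrm{Area}(Y^\varepsilon_{ij})=2\pi\varepsilon\theta_{ij}\sinh a^\varepsilon_{ij}\to 2$ and $\mathrm{Area}(Z^\varepsilon_i)=\tfrac{\pi\varepsilon\theta_i}{2}\sinh b^\varepsilon_i\to 1$. Since each $Y^\varepsilon_{ij}$ is split symmetrically between the two adjacent thick pieces while each $Z^\varepsilon_i$ lies entirely in one thick piece, $\mathrm{Area}(\widehat{X^\varepsilon_{\Theta;i}})\to 2\pi(N-2)-(N-1)-1=2\pi(N-2)-N$.

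For the collar integrals the plan is to reduce to Propositions \ref{CC3.3} and \ref{SCC3.3} by rescaling. Setting $\bar\varepsilon=\varepsilon\theta_{ij}$ rewrites the metric on $Y^\varepsilon_{ij}$ as $dx^2+(\pi\bar\varepsilon\cosh x)^2d\theta^2$ with width $\arccosh(1/(\pi\bar\varepsilon))$, which is exactly the setting of Proposition \ref{CC3.3} at $l=1$; analogously $\bar\varepsilon=\varepsilon\theta_i/2$ matches $Z^\varepsilon_i$ with Proposition \ref{SCC3.3} at $l=1$. Since $f_\varepsilon|_{Y^\varepsilon_{ij}}$ is the harmonic interpolant between the boundary values $x_i$ and $x_j$, it has the form $(x_i-x_j)f_0^\varepsilon+x_j$ for the equilibrium potential $f_0^\varepsilon$; parts (2) and (1) of Proposition \ref{CC3.3} then give $\int_{Y^\varepsilon_{ij}}f_\varepsilon^2\to x_i^2+x_j^2$ and $\int_{Y^\varepsilon_{ij}}|\nabla f_\varepsilon|^2=(x_i-x_j)^2\,\mathrm{cap}(\mathcal{C}^\varepsilon_{ij})\sim\varepsilon\theta_{ij}(x_i-x_j)^2$. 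On $Z^\varepsilon_i$ the Dirichlet condition at the inner end forces interpolation from $x_i$ to $0$, and parts (2) and (1) of Proposition \ref{SCC3.3} give $\int_{Z^\varepsilon_i}f_\varepsilon^2\to x_i^2$ and $\int_{Z^\varepsilon_i}|\nabla f_\varepsilon|^2\sim 2\cdot(\varepsilon\theta_i/2)\cdot x_i^2=\varepsilon\theta_ix_i^2$.

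Assembling, the Dirichlet integral becomes $\varepsilon\bigl(\sum_{i<j}\theta_{ij}(x_i-x_j)^2+\sum_i\theta_ix_i^2\bigr)(1+o(1))$, which is the second limit. The $L^2$ integral splits as a thick contribution $\sum_i(2\pi(N-2)-N)x_i^2$ plus a collar contribution $\sum_{i<j}(x_i^2+x_j^2)+\sum_ix_i^2=(N-1)\sum_ix_i^2+\sum_ix_i^2=N\sum_ix_i^2$; the $-N$ from the excised collar areas cancels exactly against the $+N$ from the collar $L^2$ integrals, leaving $2\pi(N-2)\sum_ix_i^2$ and giving the first limit. The main point requiring care is precisely this bookkeeping cancellation of the $O(1)$ area defects against the $O(1)$ collar contributions in the $L^2$ calculation; once the asymptotic collar areas and parts (2) of Propositions \ref{CC3.3}--\ref{SCC3.3} are in hand, it drops out automatically.
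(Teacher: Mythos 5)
Your proof is correct and is precisely the computation the paper leaves implicit (the paper only cites Propositions \ref{CC3.3} and \ref{SCC3.3} with $l=1$ and declares the rest ``easy to get''). The decomposition into thick pieces plus collars, the Gauss--Bonnet area count $2\pi(N-2)$ per $X^\varepsilon_{\Theta;i}$, the rescaling $\bar\varepsilon=\varepsilon\theta_{ij}$ (resp.\ $\varepsilon\theta_i/2$) to normalize to $l=1$, and the exact cancellation of the $O(1)$ collar-area defect against the $O(1)$ collar $L^2$ mass are all exactly what the cited propositions are set up to deliver.
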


As in \cite{C-C}, we need to show that the Dirichlet Laplacian $\Delta$ on $X^{\varepsilon}_{\Theta}$ satisfies the conditions (1) and (2) in Lemma \ref{sl}. As usual we let $P_{\Delta,I}$ be the spectral projection of $\Delta$ on interval $I$.
\begin{proposition}\label{ps}
Consider the Dirichlet Laplacian $\Delta$ on the surface $X^{\varepsilon}_{\Theta}$. Then for $\varepsilon$ close to 0, there exist  uniform constants $C_1$, $C_2>0$ such that
 \begin{enumerate}
   \item $\forall f\in E_0^\varepsilon,\ g\in H^1_0(X^{\varepsilon}_{\Theta})$,
   \begin{equation*}
   \big|\int_{X^{\varepsilon}_{\Theta}}\nabla f\cdot\nabla g\big|\leq C_1\cdot\varepsilon\|f\|_{L^2(X^{\varepsilon}_{\Theta})}\|g\|
       _{H^1_0(X^{\varepsilon}_{\Theta})},
   \end{equation*}
   \item $\dim(P_{\Delta,[0,C_2]})=N$.
 \end{enumerate}
\end{proposition}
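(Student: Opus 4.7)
For (1), I would integrate by parts. Since any $f\in E_0^\varepsilon$ is constant on each $\widehat{X^\varepsilon_{\Theta;i}}$ and harmonic on each cylinder $Y^\varepsilon_{ij}$ and $Z^\varepsilon_i$, the gradient $\nabla f$ is supported in the cylinders, so
\[
\int_{X^\varepsilon_\Theta}\nabla f\cdot\nabla g=\sum_{\text{cyl}}\int_{\text{cyl}}\nabla f\cdot\nabla g=\sum_{\text{cyl}}\int_{\partial\text{cyl}}g\,\partial_\nu f,
\]
where each boundary circle on the right either lies inside some $\widehat{X^\varepsilon_{\Theta;k}}$ or coincides with some $\tilde\gamma_k\subset\partial X^\varepsilon_\Theta$, on which $g\equiv 0$. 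Cauchy--Schwarz on each interior circle combined with $\|\partial_\nu f\|_{L^2(\text{circle})}=O(\varepsilon)\cdot|\text{jump of }f|$ (Propositions \ref{CC3.3}(3) and \ref{SCC3.3}(3)) and the uniform trace bound $\|g\|_{L^2(\text{circle})}\le C\|g\|_{H^1(\text{cyl})}\le C\|g\|_{H^1_0(X^\varepsilon_\Theta)}$ (Propositions \ref{CC3.3}(4) and \ref{SCC3.3}(4)) produces the required estimate on each circle. Summing and invoking Proposition \ref{cp} to absorb $\sum_{i<j}|x_i-x_j|+\sum_i|x_i|\le C\|f\|_{L^2}$ finishes (1).

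The inequality $\dim P_{\Delta,[0,C_2]}\ge N$ in (2) follows immediately by setting $g=f\in E_0^\varepsilon$ in (1): one obtains $Q(f)\le C_1\varepsilon\|f\|_{L^2}(\|f\|_{L^2}^2+Q(f))^{1/2}$, which solves to $Q(f)\le C'\varepsilon\|f\|_{L^2}^2$ on all of $E_0^\varepsilon$, so min--max gives $\lambda_N(\Delta)\le C'\varepsilon\le C_2$ once $\varepsilon$ is small.

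The opposite inequality $\lambda_{N+1}\ge C_2$ is the main obstacle. My plan is to establish a uniform Poincar\'e-type estimate $\|g\|^2_{L^2}\le C_3\,Q(g)$ for every $g\in H^1_0(X^\varepsilon_\Theta)$ satisfying $\int_{\widehat{X^\varepsilon_{\Theta;i}}}g\,dV=0$ for all $i$; min--max against the $N$-dimensional space $W=\mathrm{span}\{\mathbf{1}_{\widehat{X^\varepsilon_{\Theta;i}}}\}$ then yields $\lambda_{N+1}\ge C_3^{-1}=:C_2$. The estimate is proved region by region. On each thick piece $\widehat{X^\varepsilon_{\Theta;i}}$, which is a union of $N-2$ fixed--geometry hyperbolic pants glued along unit--length geodesics and therefore has a uniform second Neumann eigenvalue, the zero--average condition gives $\|g\|^2_{\widehat{X^\varepsilon_{\Theta;i}}}\le C\,Q(g)|_{\widehat{X^\varepsilon_{\Theta;i}}}$. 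On each cylinder I decompose $g=g_0+g_1$ with $g_0$ the harmonic extension of $g|_{\partial\text{cyl}}$ and $g_1=g-g_0$ vanishing on the boundary circles (and on $\tilde\gamma_i$ for $Z^\varepsilon_i$); Green's identity makes the decomposition $Q$-orthogonal. The substitution $g_1=w/\sqrt{\cosh x}$ turns the weighted Dirichlet energy of the $\theta$-constant mode into $\int(w')^2+w^2\bigl(\tfrac14+\tfrac14\mathrm{sech}^2 x\bigr)\,dx$, giving $Q(g_1)\ge\tfrac14\|g_1\|^2$; nonzero $\theta$-Fourier modes contribute additional positive mass since $\pi\varepsilon\theta\cosh x\le 1$ on each cylinder. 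Meanwhile, Fourier decomposition in $\theta$ plus the associated ODE analysis give $\|g_0\|^2_{L^2(\text{cyl})}\le C\|g\|^2_{L^2(\partial\text{cyl})}$ (Proposition \ref{CC3.3}(2) supplies the $k=0$ case; $k\ne 0$ modes decay exponentially from the boundary into the interior). The boundary trace is then controlled via the uniform trace inequality on the thick pieces together with the first step, and summing yields $\|g\|^2\le C_3\,Q(g)$. The principal difficulty is this harmonic--extension estimate on cylinders with arbitrary (non--constant) boundary data on a long thin hyperbolic waist, which requires Fourier/ODE work beyond Proposition \ref{CC3.3}(2).
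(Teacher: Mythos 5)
Your argument for part (1) coincides with the paper's: integrate by parts, use Propositions \ref{CC3.3}(3)--(4) and \ref{SCC3.3}(3)--(4) on each cylinder boundary, and absorb the jumps of $f$ into $\|f\|_{L^2}$ via Proposition \ref{cp}. The $\ge N$ direction of (2) is likewise equivalent, though the paper deduces $\lambda_i(\Delta)\to 0$ for $i\le N$ directly from Propositions \ref{CC3.3} and \ref{SCC3.3} rather than by plugging $g=f$ into the estimate of (1); both are fine.

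Where you genuinely diverge is the crucial lower bound $\lambda_{N+1}\ge C_2$. The paper avoids any direct Poincar\'e estimate: it caps off each boundary geodesic $\tilde\gamma_i$ with a handle $P_{\tilde\gamma_i}$ to form a \emph{closed} hyperbolic surface $\widetilde{X^\varepsilon_\Theta}$, invokes Theorem~1.1 of \cite{DPRS} to get $\lambda^\varepsilon_{2N+1}>C_2$ uniformly on the closed surface, and then compares by min--max using the $2N+1$ mutually $L^2$- and $H^1$-orthogonal test functions $\{\varphi^\varepsilon_1,\dots,\varphi^\varepsilon_{N+1},\psi^\varepsilon_1,\dots,\psi^\varepsilon_N\}$ (the zero-extended Dirichlet eigenfunctions plus cutoffs supported in the added handles). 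Your route instead proves a uniform Poincar\'e inequality on $\{g\in H^1_0:\int_{\widehat{X^\varepsilon_{\Theta;i}}}g=0\ \forall i\}$ by splitting the surface into thick pieces and cylinders, using a uniform Neumann gap on the thick pieces and a $Q$-orthogonal splitting $g=g_0+g_1$ on each cylinder with explicit ODE/Fourier analysis of the harmonic extension. This is self-contained (no appeal to \cite{DPRS}) and makes the mechanism of the lower bound transparent, but it has two points that must be carefully justified and that the paper sidesteps: (i) the second Neumann eigenvalue of $\widehat{X^\varepsilon_{\Theta;i}}$ must be bounded below \emph{uniformly in $\varepsilon$}, which requires a convergence statement for the thick pieces as the waist geodesics degenerate; and (ii) the estimate $\|g_0\|_{L^2(\mathrm{cyl})}^2\le C\|g\|_{L^2(\partial\mathrm{cyl})}^2$ for nonconstant boundary data on a long thin hyperbolic waist needs the exponential decay of the $k\ne 0$ Fourier modes, which you flag but do not carry out. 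Neither point looks false, but as written your proof of (2) is a sketch, whereas the paper's is complete modulo the cited \cite{DPRS} result. The trade-off is real: the paper's argument is shorter and cleaner at the cost of a nontrivial external input; yours is elementary and quantitative at the cost of genuinely more analysis.
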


\begin{proof}
The proof is simply an adjustment of the proof of Proposition II.1 in \cite{C-C} to the case of Dirichlet Laplacian. For (1),
 \begin{equation*}
 \begin{aligned}
 \big|\int_{X^{\varepsilon}_{\Theta}}\nabla f\cdot\nabla g\big|=&\big|\sum_{1\leq i<j\leq N}\int_{Y^\varepsilon_{ij}}\nabla f\cdot\nabla g+\sum_{i=1}^N\int_{Z^\varepsilon_i}\nabla f\cdot\nabla g\big|\\
 =&\big|\sum_{1\leq i<j\leq N}\int_{\partial Y^\varepsilon_{ij}}\frac{\partial f}{\partial\nu}\cdot g+\sum_{i=1}^N\int_{\{-b_i^\varepsilon\}\times\mathbb{R}
 /\mathbb{Z}}\frac{\partial f}{\partial\nu}\cdot g\big|\\
 \leq& \sum_{1\leq i<j\leq N}(\int_{\partial Y^\varepsilon_{ij}}|\frac{\partial f}{\partial\nu}|^2)^{\frac{1}{2}}(\int_{\partial Y^\varepsilon_{ij}}g^2)^{\frac{1}{2}} \\
 & \indent  +\sum_{i=1}^N(
 \int_{\{-b_i^\varepsilon\}\times\mathbb{R}/\mathbb{Z}}
 |\frac{\partial f}{\partial\nu}|^2)^{\frac{1}{2}}
  (\int_{\{-b_i^\varepsilon\}\times\mathbb{R}/\mathbb{Z}}
 g^2)^{\frac{1}{2}},
 \end{aligned}
 \end{equation*}
and thus by Proposition \ref{CC3.3} and \ref{SCC3.3}, there exists  $C_1>0$ such that
 \begin{equation*}
 |\int_{X^{\varepsilon}_{\Theta}}\nabla f\cdot\nabla g|\leq C_1\cdot\varepsilon\|f\|_{L^2(X^{\varepsilon}_{\Theta})}\|g\|_{H^
 1_0(X^{\varepsilon}_{\Theta})}.
 \end{equation*}

For (2), we can't apply Theorem 1.1 in \cite{DPRS} directly, since $X^\varepsilon_\Theta$ is not a closed surface. However, we may first ``close up" $X^\varepsilon_\Theta$ as follows: for each $1 \le i \le N$, we glue a pant $P_{\tilde{\gamma_i},1,1}$ to $X^\varepsilon_i$ along the geodesic $\tilde{\gamma_i}$ without twist, and glue the two geodesics of length 1 of $P_{\tilde{\gamma_i},1,1}$ together without twist. The resulting closed surfaces is denoted by $\widetilde{X^{\varepsilon}_{\Theta}}$. We  will  denote by $P_{\tilde{\gamma_i}}$   the surface constructed by gluing the two geodesics of length 1 of $P_{\tilde{\gamma_i},1,1}$ together without twist.\\
\includegraphics[scale=0.3]{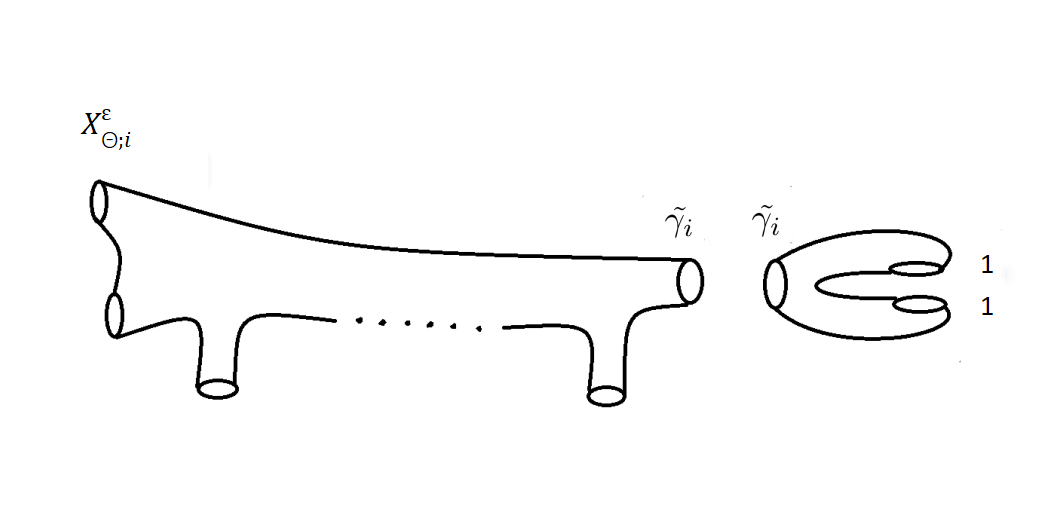}

Now we may apply Theorem 1.1 in \cite{DPRS} to conclude that there exists a constant  $C_2>0$, independent of $\varepsilon$, such that for all $\varepsilon$ close to 0,
\[\lambda^\varepsilon_{2N+1}>C_2,\]
where $\lambda^\varepsilon_{2N+1}$ is the $(2N+1)$th eigenvalue of the closed Laplacian on $\widetilde{X^{\varepsilon}_{\Theta}}$.

 Let $\{\varphi^\varepsilon_i\}_{i=1}^{N+1}$ be the first $N+1$ orthonormal eigenfunctions of the Dirichlet Laplacian $\Delta$ on the surface $X^{\varepsilon}_{\Theta}$,   viewed as functions in $H^1(\widetilde{X^{\varepsilon}_{\Theta}})$ by  zero extension on each $P_{\tilde{\gamma_i}}$. For $1\leq i\leq N$, let
 \[W^\varepsilon_i=[-b^\varepsilon_i,0]\times\mathbb{R}/\mathbb{Z}\] (where $b^\varepsilon_i=\arccosh(\frac{2}{\pi\varepsilon
\theta_i})$) be the collar neighborhood of $\tilde{\gamma_i}$ in $P_{\tilde{\gamma_i}}$, with hyperbolic metric $\mathrm{d}x^2+(\frac{\pi\varepsilon}{2}\theta_i\cosh x)^2\mathrm{d}\theta^2$, and let $\psi_i^\varepsilon$ be the function in $H_0^1(P_{\tilde{\gamma_i}})$ which equals 1 on $P_{\tilde{\gamma_i}}\setminus W_i^\varepsilon$ and is harmonic on $W_i^\varepsilon$. We   also view $\psi_i^\varepsilon$  as a function in $H^1(\widetilde{X^{\varepsilon}_{\Theta}})$ by   zero extension on $\widetilde{X^{\varepsilon}_{\Theta}}\setminus P_{\tilde{\gamma_i}}$. By Proposition \ref{SCC3.3},
 \begin{equation*}
 \lim_{\varepsilon\to0}\frac{\int_{\widetilde{X^{\varepsilon}_{\Theta}}}
 |\nabla\psi_i^\varepsilon|^2}{\int_{\widetilde{X^{\varepsilon}_{\Theta}}}
 (\psi_i^\varepsilon)^2}=0.
 \end{equation*}
Obviously $\{\varphi_i^\varepsilon,\psi_j^\varepsilon\}_{1\leq i\leq N+1,1\leq j\leq N}$ are pairwise orthogonal in both $L^2(\widetilde{X^{\varepsilon}_{\Theta}})$ and $H^1(\widetilde{X^{\varepsilon}_{\Theta}})$. So by min-max principle,
\[\lambda_{N+1}(\Delta)\geq\lambda^\varepsilon_{2N+1}>C_2.\]
On the other hand, by Proposition \ref{CC3.3} and \ref{SCC3.3}, we have
\[\lim_{\varepsilon\to0}\lambda_i(\Delta)=0 \]
for $1\leq i\leq N$. Thus $\dim(P_{\Delta,[0,C_2]})=N$.
\end{proof}

Recall that in section \ref{cog}, we   defined the function space
\[H=\{f:V\to\mathbb{R}|\ f(u_i)=0,\ \forall 1\leq i\leq N\}\]
and  the quadratic form $q_\Theta$ on $(H,\langle\ ,\ \rangle_{\mu_0})$,
 \begin{equation}\label{qtheta}
 q_\Theta(f)=\sum_{1\leq i<j\leq N}\theta_{ij}(f(v_i)-f(v_j))^2+\sum_{i=1}^N\theta_if(v_i)^2.
 \end{equation}
Let $q^\varepsilon_\Theta$ be the quadratic form on $H^1_0(X^\varepsilon_\Theta)$ defined by
 \begin{equation}\label{qetheta}
 q^\varepsilon_\Theta(\varphi)=\int_{X^\varepsilon_\Theta}|
 \nabla\varphi|^2.
 \end{equation}
To compare $q_\Theta$ and $q^\varepsilon_\Theta$, we consider the map
 \begin{equation}
 U^\varepsilon_\Theta:H\to H_0^1(X^\varepsilon_\Theta),\ f\mapsto U^\varepsilon_\Theta f
 \end{equation}
where $U^\varepsilon_\Theta f \in H_0^1(X^\varepsilon_\Theta)$ is the function that takes the value $f(v_i)$ on $\widehat{X_{\Theta,i}^\varepsilon}$ and is harmonic on $Y^\varepsilon_{ij}$ and $Z^\varepsilon_i$.

Let $E^\varepsilon$ be the $N$-eigenspace  of $q^\varepsilon_\Theta$. For the inner product \[(U^\varepsilon_\Theta)^\ast(\langle\cdot,\cdot\rangle_{E_0^\varepsilon}
):=\langle U^\varepsilon_\Theta\cdot,U^\varepsilon_\Theta\cdot\rangle_{E_0^\varepsilon}\] on $H$,   there exists an isometry $\big($see (\ref{iso})$\big)$
\[U_{H,H}: (H,\langle\ ,\ \rangle_{\mu_0}) \to  \big(H,(U^\varepsilon)^\ast(\langle\cdot,\cdot\rangle_{E^\varepsilon
_0})\big).\]
Consider the quadratic form
\[{q^\varepsilon_{\Theta;1}}=(q^\varepsilon_\Theta)_{E^\varepsilon/E_0^\varepsilon}
\circ U^\varepsilon_\Theta\circ U_{H,H}\]
on $H$, where
 $(q^\varepsilon_\Theta)_{E^\varepsilon/E_0^\varepsilon}$ was defined in (\ref{toq}). Then by Lemma \ref{sl} and Proposition \ref{cp}, \ref{ps} we immediately get

\begin{lemma}\label{ucl}
We have
\[\lim_{\varepsilon\to0}\|\frac{1}{\varepsilon}
{q^\varepsilon_{\Theta;1}}-q_\Theta\|_\infty=0\]
and the convergence is locally uniform on $\Theta\in\mathbb{R}_{>0}^{|E|}$.
\end{lemma}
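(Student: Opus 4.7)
The plan is to combine Proposition \ref{cp}, which says that on the model subspace $E_0^\varepsilon = U^\varepsilon_\Theta(H)$ the $L^2$-inner product converges to $\langle\cdot,\cdot\rangle_{\mu_0}$ and the rescaled Dirichlet form $\frac{1}{\varepsilon}q^\varepsilon_\Theta$ converges to $q_\Theta$, with Lemma \ref{sl}(b), which says the true $N$-eigenspace $E^\varepsilon$ of $q^\varepsilon_\Theta$ is quantitatively close to $E_0^\varepsilon$ provided its hypotheses hold. Proposition \ref{ps} furnishes those hypotheses: part (1) supplies the $\varepsilon$-orthogonality of $E_0^\varepsilon$ against arbitrary $H^1_0$-directions with constant $C_1\varepsilon$, and part (2) together with $\dim E_0^\varepsilon = N$ gives the dimension condition $\dim P_{\Delta,[0,C_2]} = N$.

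Concretely, I would first invoke Lemma \ref{sl}(b) with $q=q^\varepsilon_\Theta$, $E_0=E^\varepsilon_0$ and $E=E^\varepsilon$ to obtain
\[\bigl\|(q^\varepsilon_\Theta)_{E^\varepsilon/E^\varepsilon_0} - q^\varepsilon_\Theta|_{E^\varepsilon_0}\bigr\|_\infty \leq C\varepsilon^{1+1/N},\]
where the norm is taken with respect to the $L^2$-inner product on $E_0^\varepsilon$. Pulling back to $H$ via $U^\varepsilon_\Theta$, set $\tilde q_\varepsilon(f) := \tfrac{1}{\varepsilon}q^\varepsilon_\Theta(U^\varepsilon_\Theta f)$; by Proposition \ref{cp} one has $\tilde q_\varepsilon \to q_\Theta$ and $(U^\varepsilon_\Theta)^*\langle\cdot,\cdot\rangle_{L^2} \to \langle\cdot,\cdot\rangle_{\mu_0}$ pointwise on the finite-dimensional space $H$, hence uniformly on bounded sets, and the latter convergence forces the isometry $U_{H,H}$ to tend to the identity in operator norm. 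Then for $f\in H$,
\[\frac{1}{\varepsilon} q^\varepsilon_{\Theta;1}(f) = \tilde q_\varepsilon(U_{H,H}f) + O(\varepsilon^{1/N})\,\|U^\varepsilon_\Theta U_{H,H}f\|_{L^2}^2,\]
in which the first term tends to $q_\Theta(f)$ uniformly on the $\langle\cdot,\cdot\rangle_{\mu_0}$-unit sphere and the remainder vanishes uniformly, yielding $\tfrac{1}{\varepsilon}q^\varepsilon_{\Theta;1} \to q_\Theta$ in operator norm on $(H,\langle\cdot,\cdot\rangle_{\mu_0})$. Local uniformity on $\Theta\in\mathbb{R}_{>0}^{|E|}$ is then automatic, since every ingredient---the collar widths $a^\varepsilon_{ij}$ and $b^\varepsilon_i$, the limits in Proposition \ref{cp}, and the constants $C_1,C_2$ in Proposition \ref{ps}---depends continuously on $\Theta$, and continuous convergence on a compact parameter set is uniform.

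I expect the main bookkeeping obstacle to be the interplay between the three inner products in play: the original $\langle\cdot,\cdot\rangle_{\mu_0}$ on $H$, its $\varepsilon$-perturbation $(U^\varepsilon_\Theta)^*\langle\cdot,\cdot\rangle_{L^2}$ on $H$, and the ambient $L^2$-inner product on $H^1_0(X^\varepsilon_\Theta)$ that is used to define the spectral isometry in (\ref{iso}). The role of the auxiliary isometry $U_{H,H}$ is precisely to reconcile the first two, so that the composition $U^\varepsilon_\Theta\circ U_{H,H}$ becomes an honest $L^2$-isometry from $(H,\langle\cdot,\cdot\rangle_{\mu_0})$ into $E_0^\varepsilon$ --- which is the exact input the $U_{E_0,E_1}$ construction requires. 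Tracking this composition carefully, and verifying that the resulting operator-norm error on $H$ remains $O(\varepsilon^{1/N})$ after dividing by $\varepsilon$, is the only non-cosmetic step in the argument.
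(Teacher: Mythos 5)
Your argument is correct and follows the same route as the paper, which simply asserts the lemma ``immediately'' from Lemma \ref{sl}, Proposition \ref{cp} and Proposition \ref{ps} without writing out the bookkeeping. Your unwinding of the three inner products, the observation that $U^\varepsilon_\Theta\circ U_{H,H}$ is an $L^2$-isometry (so $\|U^\varepsilon_\Theta U_{H,H}f\|_{L^2}=|f|_{\mu_0}$, making the error term $O(\varepsilon^{1/N})$ in $\mu_0$-operator norm), and the remark that $U_{H,H}\to\mathrm{Id}$ are exactly the details the paper leaves implicit.
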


Now consider the continuous map
 \begin{equation}\aligned
 \Phi^\varepsilon:\mathbb{R}^{|E|}_{>0} & \to\mathcal{Q}(H)
 :=\{\text{real quadratic forms on $H$}\},\\
  \Theta&\mapsto\frac{1}{\varepsilon}q^\varepsilon_{\Theta;1}.
 \endaligned
 \end{equation}
Then $\Phi^\varepsilon$ converges  locally uniformly to the map
 \begin{equation}\label{PHI}
 \Phi:\mathbb{R}_{>0}^{|E|}\to\mathcal{Q}(H),\quad \Theta\mapsto q_\Theta.
 \end{equation}
Since $\Phi$ is linear and the dimension of $\mathcal{Q}(H)$ is $\frac{N(N+1)}{2}=|E|$, it is a local diffeomorphism. 
As in \cite{C-C} we need the following topological lemma, which follows from the standard Brouwer's Fixed Point Theorem:
\begin{lemma}\label{fixed}
Let $B$ be a closed ball in $\mathbb{R}^n$ and $\varphi:B\to K\subset\mathbb{R}^n$ be a homeomorphism. If we have a family of continuous map $\varphi_\varepsilon:B\to\mathbb{R}^n$ such that $\varphi_\varepsilon$ converges uniformly to $\varphi$ when $\varepsilon\to 0$. Then for any $p\in\mathrm{int}(K)$, we have $p\in\varphi_\varepsilon(B)$ for $\varepsilon$ close to $0$.
\end{lemma}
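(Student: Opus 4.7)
The plan is to localize around $q := \varphi^{-1}(p)$ and construct a continuous self-map of a small closed ball around $q$ whose fixed point, supplied by Brouwer, is a preimage of $p$ under $\varphi_\varepsilon$.

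First I would verify that $q \in \mathrm{int}(B)$. Since $p \in \mathrm{int}(K)$, there is a Euclidean open neighborhood $U \subset K$ of $p$; the restriction $\varphi^{-1}|_U$ is a continuous injection into $\mathbb{R}^n$, so by invariance of domain $\varphi^{-1}(U)$ is open in $\mathbb{R}^n$, forcing $q = \varphi^{-1}(p) \in \mathrm{int}(B)$. Next I would choose $r > 0$ small enough that $\bar B(q,r) \subset B$ and so that $V := \varphi(\bar B(q,r))$ is contained, together with a neighborhood, in $\mathrm{int}(K)$; concretely, pick an open $W$ with $V \subset W \subset \bar W \subset \mathrm{int}(K)$. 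Because $\varphi^{-1}$ is uniformly continuous on the compact set $\bar W$, the uniform convergence $\varphi_\varepsilon \to \varphi$ implies that, for every prescribed $\delta > 0$ and all sufficiently small $\varepsilon$, one has both $\varphi_\varepsilon(\bar B(q,r)) \subset W \subset K$ and $|\varphi^{-1}(\varphi_\varepsilon(x)) - x| < \delta$ uniformly in $x \in \bar B(q,r)$.

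Taking $\delta = r$, the continuous map
\[
T_\varepsilon(x) := x + q - \varphi^{-1}(\varphi_\varepsilon(x))
\]
sends $\bar B(q,r)$ into itself, since $|T_\varepsilon(x) - q| = |\varphi^{-1}(\varphi_\varepsilon(x)) - x| < r$. Brouwer's Fixed Point Theorem then yields $x^* \in \bar B(q,r)$ with $T_\varepsilon(x^*) = x^*$, and this identity rearranges to $\varphi^{-1}(\varphi_\varepsilon(x^*)) = q$, i.e.\ $\varphi_\varepsilon(x^*) = \varphi(q) = p$, proving $p \in \varphi_\varepsilon(B)$.

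The only step that requires real thought is verifying $q \in \mathrm{int}(B)$: a priori $\varphi$ is merely assumed to be a homeomorphism of a topological ball onto its image, so one cannot argue by a bare dimension count and must invoke invariance of domain. Once that is in hand, the rest is routine uniform-continuity bookkeeping and a direct application of Brouwer.
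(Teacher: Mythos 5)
Your proof is correct and rests on the same key idea as the paper's: apply Brouwer's Fixed Point Theorem on a small closed ball after exploiting the uniform convergence $\varphi_\varepsilon\to\varphi$. The only substantive difference is that the paper localizes in the image, working with $\varphi_\varepsilon\circ\varphi^{-1}$ on a ball $\overline{B_\delta(p)}\subset\mathrm{int}(K)$, which sidesteps your need to verify $\varphi^{-1}(p)\in\mathrm{int}(B)$ via invariance of domain; the paper also argues by contradiction (if $p$ is never hit, one builds a continuous fixed-point-free self-map of the ball), whereas you build the self-map $T_\varepsilon$ directly — a cosmetic distinction.
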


Now, we are ready to prove
\begin{theorem}\label{eos}
For any finite sequence $0<a_1<a_2\leq a_3\leq\cdots\leq a_N$, there exists a metric $g$ on  $X_N=\Sigma_{\frac{N(N-3)}{2}+1}-\{D_1,\cdots,D_N\}$ such that
\[\lambda_k^\mathcal{D}(X,g)=a_k, \qquad \forall 1\leq k\leq N.\]
\end{theorem}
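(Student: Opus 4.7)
My plan is to combine the three main tools built up in Sections \ref{cog} and \ref{sec4}: the combinatorial prescription of Theorem \ref{CoG}, the quadratic‑form convergence of Lemma \ref{ucl}, and the topological Lemma \ref{fixed}. The key observation is that the target eigenvalues can first be realized on the graph $G_N$, and then transplanted to the surface via the fixed‑point argument used by Colbois and Colin de Verdi\`ere.

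Concretely, I would proceed as follows. First, apply Theorem \ref{CoG} with the measure $\mu_0=\sum 2\pi(N-2)\delta(v_i)$ and the target sequence $a_1<a_2\leq\cdots\leq a_N$ to obtain an edge‑weight function $\Theta_0:E\to\mathbb{R}_{>0}$ such that the combinatorial Dirichlet Laplacian associated to $(H,\langle\,,\,\rangle_{\mu_0},q_{\Theta_0})$ has eigenvalues exactly $a_1,\dots,a_N$. Second, examine the map $\Phi:\mathbb{R}^{|E|}_{>0}\to\mathcal{Q}(H)$ from (\ref{PHI}): it is linear, and since $|E|=\binom{N}{2}+N=\tfrac{N(N+1)}{2}=\dim\mathcal{Q}(H)$, and the coefficients $\theta_{ij},\theta_i$ are recovered from $q_\Theta$ as matrix entries in the basis $\{\delta_{v_i}\}$ of $H$, $\Phi$ is a linear isomorphism onto its image. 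Choosing a small closed ball $B\subset\mathbb{R}^{|E|}_{>0}$ centered at $\Theta_0$, $\Phi(\Theta_0)$ is an interior point of $\Phi(B)$.

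Third, by Lemma \ref{ucl} the family $\Phi^\varepsilon$ converges uniformly to $\Phi$ on $B$. Hence Lemma \ref{fixed} produces, for every sufficiently small $\varepsilon>0$, a point $\Theta_\varepsilon\in\mathrm{int}(B)$ with
\[
\Phi^\varepsilon(\Theta_\varepsilon)=\Phi(\Theta_0)=q_{\Theta_0}.
\]
By construction, $\Phi^\varepsilon(\Theta_\varepsilon)=\tfrac{1}{\varepsilon}q^\varepsilon_{\Theta_\varepsilon;1}$ is the pull‑back of $q^\varepsilon_{\Theta_\varepsilon}|_{E^\varepsilon}$ to $(H,\langle\,,\,\rangle_{\mu_0})$ through the chain of $L^2$‑isometries $U^\varepsilon_{\Theta_\varepsilon}\circ U_{H,H}$ followed by the isomorphism $E_0^\varepsilon\to E^\varepsilon$ from (\ref{iso}); these isometries preserve spectra. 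Therefore the eigenvalues of $\tfrac{1}{\varepsilon}q^\varepsilon_{\Theta_\varepsilon;1}$ on $(H,\langle\,,\,\rangle_{\mu_0})$ agree with those of $q_{\Theta_0}$, i.e.\ with $a_1,\dots,a_N$, which forces $\lambda^{\mathcal D}_k(X^\varepsilon_{\Theta_\varepsilon})=\varepsilon\,a_k$ for $1\leq k\leq N$. Finally, the scaling $g:=\varepsilon\cdot g^\varepsilon_{\Theta_\varepsilon}$ on $X_N$ satisfies $\Delta_g=\varepsilon^{-1}\Delta_{g^\varepsilon_{\Theta_\varepsilon}}$, so $\lambda^{\mathcal D}_k(X_N,g)=a_k$ as required.

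The delicate points, rather than any single obstacle, are twofold. One is checking that $\Phi$ is genuinely a linear isomorphism, which is the numerological reason the graph $G_N$ was chosen to have exactly $\tfrac{N(N+1)}{2}$ edges; if fewer edges were present one could not realize an arbitrary symmetric quadratic form on $H$, and the fixed‑point step would fail. The second, already handled by Lemma \ref{ucl}, is the uniformity of the convergence $\Phi^\varepsilon\to\Phi$ on compact subsets, which in turn relies on the small‑eigenvalue estimates of Lemma \ref{sl} together with the capacitor asymptotics of Propositions \ref{CC3.3} and \ref{SCC3.3}; once this uniformity is granted, Lemma \ref{fixed} finishes the argument cleanly.
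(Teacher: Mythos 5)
Your proof is correct and follows essentially the same route as the paper: apply Theorem~\ref{CoG} to realize the target spectrum on $G_N$, use Lemma~\ref{ucl} together with the topological Lemma~\ref{fixed} to find $\Theta_\varepsilon$ with $\Phi^\varepsilon(\Theta_\varepsilon)=q_{\Theta_0}$, and rescale the metric by $\varepsilon$. The only added value is your explicit remark that $\Phi$ is a linear isomorphism because the coefficients $\theta_{ij},\theta_i$ can be read off from the matrix of $q_\Theta$ in the basis of delta functions, which the paper leaves implicit in its dimension count.
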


\begin{proof} By Theorem \ref{CoG}, for the measure
$\mu_0=\sum_{i=1}^N2\pi(N-2)\delta(v_i)$
on the graph $G_N=(V,E)$, there exists an edge weight function $\Theta:E\to\mathbb{R}_{>0}$ such that the eigenvalues of combinatorial Laplacian $\Delta_\Theta$ are exactly $\{a_1,a_2,\cdots,a_N\}$.  By Lemma \ref{ucl}, \ref{fixed}, we have $q_\Theta\in\mathrm{Im}(\Phi^\varepsilon)$ for $\varepsilon$ close to 0. In other words,  there exists $\Theta'=\Theta'(\varepsilon)\in\mathbb{R}^{|E|}_{>0}$ such that \[\frac{1}{\varepsilon}q^{\varepsilon}_{\Theta';1}=q_\Theta.\]
By construction, the eigenvalues of $q^\varepsilon_{\Theta';1}$ are exactly the first $N$ eigenvalues of the Dirichlet Laplacian on the Riemann surface $X^\varepsilon_{\Theta'}$. Let $X^\varepsilon_{\Theta'}$ be $X_N$ equipped with the Riemannian metric $g^{\varepsilon}_{\Theta'}$, then $g={\varepsilon}g^{\varepsilon}_{\Theta'}$ is the metric we want.
\end{proof}
\begin{remark}
Recall that we have denoted the metric on $X^\varepsilon_\Theta$ by $g^\varepsilon_\Theta$, then what we really proved is that the map
\[f_\varepsilon:\mathbb{R}^{|E|}_{>0}\to\mathcal{M}(X_N),\ \Theta\mapsto\varepsilon g^\varepsilon_\Theta,\]
when  restricted to a certain ball centered at $\Theta'(\varepsilon)$, is stable around $\varepsilon g^\varepsilon_{\Theta'}$.
\end{remark}

\section{Prescribing the volume and the eigenvalues}

Now we prove our main theorem. Roughly speak, we embed the surface $X^\varepsilon_{\Theta'}$ into the given manifold $M$ with boundary, such that the boundary of the surface gets embedded into the boundary of $M$, then we construct a Riemannian metric that ``concentrates" near the embedded surface to prescribe the first $N$ eigenvalues of the Dirichlet Laplacian on $M$. Finally, we attach a elongated $n$-dimensional cuboid to the boundary of $M$ to prescribe the volume.

We first adjust  Theorem III.1 in \cite{CV2} to the case of manifold with boundary:

\begin{lemma}\label{At}
Let $(M,g)$ be a compact Riemannian manifold with smooth boundary of dimension $n\geq 3$ and $\Omega_+ \subset M$ be a domain  with piecewise smooth boundary. Consider the Laplacian $\Delta^0$ of mixed boundary conditions on $\Omega_+$ which admits
\begin{center} Neumann on $\partial\Omega_+\cap\mathrm{int}(M)$, \; Dirichlet on $\partial\Omega_+\cap\partial M$. \end{center}
Suppose the eigenvalues of this Laplacian verifies the hypothesis (\ref{hoe}). Then for all $\alpha>0$, there exists a metric $h$ on $M$ with $h|_{\Omega_+}=g$, such that the quadratic forms associated with the Dirichlet Laplacian   $\Delta^{\mathcal{D}}_h$ on $(M,h)$ and the Laplacian $\Delta^0$ have an $N$-spectral difference  $\leq\alpha$, where the eigenspaces are all viewed as subspaces of $L^2(M,g)$.
\end{lemma}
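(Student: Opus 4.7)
The plan is to apply Theorem \ref{ct1} with $Q = Q_h$ the Dirichlet energy on $H_0^1(M,h)$, for a suitably crushed metric $h$ on $M$ satisfying $h|_{\Omega_+}=g$, together with an explicit $Q_h$-orthogonal decomposition of $H_0^1(M,h)$ whose ``low'' part approximates $\Delta^0$. Concretely, I would fix a smooth extension $g_0$ of $g$ to all of $M$ and construct a family $\{h_t\}_{t>0}$ of metrics via a conformal factor $e^{2\phi_t}$ with $\phi_t\equiv 0$ on $\Omega_+$ and $\phi_t\to -\infty$ in the interior of $\Omega_-:=\overline{M\setminus\Omega_+}$ as $t\to 0$. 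Since $n\geq 3$, a standard scaling computation gives as $t\to 0$: (a) $\lambda_1^{\mathcal{D}}(\Omega_-,h_t)\to\infty$; and (b) the Dirichlet energy $\int_{\Omega_-}|\nabla u|^2_{h_t}\,dV_{h_t}$ and $L^2(\Omega_-,h_t)$-mass of the $h_t$-harmonic extension of any $H^{1/2}$-bounded trace on the interface $\Gamma := \partial\Omega_+\cap\mathrm{int}(M)$ (with zero trace on $\partial M\cap\partial\Omega_-$) both tend to $0$.

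For the decomposition, let $V_+ := \{f\in H^1(\Omega_+) : f|_{\partial\Omega_+\cap\partial M}=0\}$ be the form domain of $\Delta^0$, and define the extension $E\colon V_+\to H_0^1(M,h_t)$ by $E(f)=f$ on $\Omega_+$ with $E(f)|_{\Omega_-}$ the $h_t$-harmonic extension of $f|_\Gamma$ having zero trace on $\partial M\cap\partial\Omega_-$. Setting $\mathcal{H}_0 := E(V_+)$ and $\mathcal{H}_\infty := H_0^1(\Omega_-,h_t)$ (viewed in $H_0^1(M,h_t)$ by zero-extension across $\Gamma$), a short integration by parts, using harmonicity of $E(f)$ on $\Omega_-$ and the vanishing of $\mathcal{H}_\infty$ on $\Gamma$, produces the $Q_{h_t}$-orthogonal splitting $H_0^1(M,h_t) = \mathcal{H}_0 \oplus \mathcal{H}_\infty$. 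The pull-back via $E$ of the restriction of $Q_{h_t}$ and of the $L^2$-inner product to $\mathcal{H}_0$ differs from the data of $\Delta^0$ on $V_+$ precisely by the two tail integrals in (b); thus the hypotheses of Theorem \ref{ct2} are met, and the first $N+1$ eigenvalues of $Q_0 := Q_{h_t}|_{\mathcal{H}_0}$ converge to those of $\Delta^0$. In particular, for $t$ small, $Q_0$ satisfies (\ref{hoe}) with a slightly shrunk gap $\tilde\delta'$ and has $N$-spectral difference $\leq\alpha/2$ from $\Delta^0$. Shrinking $t$ further so that by (a) we also have $Q_{h_t}(v)\geq C\|v\|^2$ on $\mathcal{H}_\infty$ for $C=C(\tilde\delta',M,N,\alpha/2)$ the constant furnished by Theorem \ref{ct1}, that theorem gives $N$-spectral difference $\leq\alpha/2$ between $Q_0$ and the full Dirichlet form $Q_{\Delta^{\mathcal{D}}_{h_t}}$. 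A triangle inequality for the sup-norm in Definition \ref{sd} combines these two estimates into the required bound $\leq\alpha$, and we take $h := h_t$.

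The main obstacle is establishing the uniform decay claimed in (b): that the Dirichlet energy and $L^2$-mass of $h_t$-harmonic extensions vanish uniformly on bounded subsets of $V_+$, with rates controlled by the $H^1$-norm $\|f\|_{L^2(\Omega_+)}+\|\nabla f\|_{L^2(\Omega_+)}$ so as to fit the framework of Theorem \ref{ct2}. This is where the dimension hypothesis $n\geq 3$ enters decisively---through the factor $e^{(n-2)\phi_t}\to 0$ in the conformal scaling of squared gradients---exactly as in the proof of Theorem III.1 of \cite{CV2}. A further minor point, easily absorbed into the analysis, is that ``viewing all eigenspaces as subspaces of $L^2(M,g)$'' as in the statement requires relating $L^2(M,h_t)$ to $L^2(M,g)$; this is harmless because the first $N$ Dirichlet eigenfunctions of $(M,h_t)$ concentrate on $\Omega_+$, where $h_t=g$, modulo errors absorbed into the slack.
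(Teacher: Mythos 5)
Your overall strategy is the same as the paper's: conformally crush the metric on $\Omega_-=\overline{M\setminus\Omega_+}$, decompose the form domain into the harmonic-extension piece $\mathcal H_0$ and the piece $\mathcal H_\infty$ of functions vanishing on the interface $\Gamma$, and then invoke Theorem \ref{ct1} (to pass from the full form to $\mathcal H_0$) and Theorem \ref{ct2} with a trace-and-extension operator (to compare $\mathcal H_0$ with the data of $\Delta^0$). However, there is a genuine gap in your step (a). You posit a \emph{smooth} conformal factor $e^{2\phi_t}$ with $\phi_t\equiv0$ on $\Omega_+$ and $\phi_t\to-\infty$ only in the interior of $\Omega_-$; since $\phi_t$ is smooth and vanishes on the closed set $\Omega_+$, it remains (near) zero on a fixed collar of $\Gamma$ inside $\Omega_-$. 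On that collar $h_t$ coincides with $g_0$, so there are test functions $f\in H_0^1(\Omega_-)$, supported in the collar, whose $h_t$-Rayleigh quotient is bounded independently of $t$. Thus $\lambda_1^{\mathcal D}(\Omega_-,h_t)$ does \emph{not} tend to $\infty$, and the $\mathcal H_\infty$-coercivity hypothesis of Theorem \ref{ct1} is not met. Your uniform-decay estimate (b) has a parallel issue: the $H^1$-norm of the $h_t$-harmonic extension, and hence its crushed $L^2$ mass, is not controlled uniformly on the part of $\Omega_-$ where $\phi_t$ stays bounded.

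The paper sidesteps this by working first with a \emph{singular} metric $g_\varepsilon$ equal to $g$ on $\Omega_+$ and $\varepsilon\cdot g$ on all of $\Omega_-$ (with the jump at $\Gamma$), transported to a quadratic form $q_\varepsilon$ on the fixed Hilbert space $L^2(M,g)$ via a rescaling isometry. This has two advantages your variant lacks. First, the conformal weight is $\varepsilon^{n/2-1}$ uniformly down to $\Gamma$, so $q_\varepsilon|_{\mathcal H_\infty}\ge\frac{1}{\varepsilon}\lambda_1^{\mathcal D}(\Omega_-,g)\|\cdot\|^2$ and the required coercivity is immediate. Second, since the conformal factor is constant on $\Omega_-$, the notion of $g_\varepsilon$-harmonic coincides with $g$-harmonic; the extension operator $P^-=P_-\circ\iota\circ\Tr$ (their analogue of your $E$) is therefore \emph{independent of} $\varepsilon$, so the pulled-back forms $Q_n$ and inner products $\langle\cdot,\cdot\rangle_n$ in Theorem \ref{ct2} are controlled via $\varepsilon$-independent trace and harmonic-extension bounds. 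Your $E=E_t$ varies with $t$, and you would additionally have to establish $t$-uniform trace and extension estimates. Only after the singular analysis is done does the paper briefly smooth $g_\varepsilon$ (a small perturbation that keeps $h|_{\Omega_+}=g$), whereas your formulation demands the smoothness from the outset, forcing you to handle a shrinking transition collar and a $t$-dependent harmonic extension simultaneously. If you revise your family to let the transition region near $\Gamma$ shrink as $t\to0$ and control the resulting extension operators uniformly, the argument can be rescued, but as written the key coercivity claim fails.
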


\begin{proof}
As in \cite{CV2}, consider the singular metric
\[g_\varepsilon=\left\{\begin{array}{ll}g, & \text{on }\Omega_+, \\
  \varepsilon\cdot g, & \text{on }\Omega_-=M\setminus\Omega_+.\end{array}\right.\]
Define a quadratic form $q_\varepsilon$ on $H^1_0(M,g_\varepsilon)$ by
 \begin{equation*}
 q_\varepsilon(\varphi)=\int_{\Omega_+}|\nabla\varphi|^2
 \mathrm{d}\mathcal{V}_g+\varepsilon^{\frac{n}{2}-1}\int
 _{\Omega_-}|\nabla\varphi|^2\mathrm{d}\mathcal{V}_g.
 \end{equation*}
By the isometry from $L^2(M,g_\varepsilon)$ to $L^2(M,g)$ defined by $\varphi\mapsto(\varphi|_{\Omega_+},\varepsilon^{\frac{n}{4}
}\varphi|_{\Omega_-})$, one can transport $q_\varepsilon$ to a quadratic form on $L^2(M,g)$ with domain
\[\aligned
D(q_\varepsilon)=\big\{(\varphi_+,\varphi_-)\in H^1(\Omega_+,g)\oplus H^1(\Omega_-,g)\big|\ &\varepsilon^{\frac{n}{4}}\varphi_+|_{\partial
\Omega_+\cap\partial\Omega_-}=\varphi_-|_{\partial\Omega_+\cap\partial\Omega_-},\\   \varphi_+|_{\partial\Omega_+\cap\partial M}& =0,\ \varphi_-|_{\partial\Omega_-\cap\partial M}=0\big\}
\endaligned\]
which is still denoted by $q_\varepsilon$. It follows
 \begin{equation*}
 q_\varepsilon(\varphi_+\oplus\varphi_-)=\int_{\Omega_+}
 |\nabla\varphi_+|^2\mathrm{d}\mathcal{V}_g+\frac{1}{\varepsilon}
 \int_{\Omega_-}|\nabla\varphi_-|^2\mathrm{d}\mathcal{V}_g.
 \end{equation*}

Denote
\[\widetilde{H}^1(\Omega_+)=\{f\in H^1(\Omega_+,g)\big|\ f|_{\partial\Omega_+\cap\partial M}=0\}.\]
Instead of simply using  the harmonic extension operator
\[P_-:H^{\frac{1}{2}}(\partial\Omega_-,g_{\partial M})\to H^1(\Omega_-,g)\]
as in  \cite{CV2}, we need  to use the more complicated operator
\[P^-=P_-\circ\iota\circ\Tr,\]
where
$\Tr$ is the trace operator
\[\Tr:\widetilde{H}^1(\Omega_+)\to H^{\frac{1}{2}}(\partial\Omega_+\cap\partial \Omega_-,g_{\partial\Omega_+\cap\partial\Omega_-}),\]
and
$\iota:\mathrm{Im}(\Tr)\hookrightarrow H^{\frac{1}{2}}(\partial\Omega_-,g_{\partial\Omega_-})$ is the operator
\[\iota (f)=\left\{\begin{array}{ll}f, & \text{ on } \partial\Omega_+\cap\partial\Omega_-, \\
 0, & \text{ on }\partial\Omega_-\cap\partial M.\end{array}\right.\]
The rest of the proof is the same as Theorem III.1 in \cite{CV2}:
with the $q_\varepsilon$-orthogonal decomposition  $D(q_\varepsilon)=\mathcal{H}_0\oplus\mathcal{H}_\infty$ where
\[\mathcal{H}_0=\{(\varphi_+,\varphi_-)\ |\ \Delta\varphi_-=0\},\qquad \mathcal{H}_\infty=\{(0,\varphi_-)\ |\ \varphi_-\in H_0^1(\Omega_-,g)\}.\]
When $\varepsilon$ is small enough, Theorem \ref{ct1} implies $q_\varepsilon$ and $q_\varepsilon|_{\mathcal{H}_0}$ have a small $N$-spectral difference, and Theorem \ref{ct2} and the operator $P^-$ imply that  $q_\varepsilon|_{\mathcal{H}_0}$ and the quadratic form associated to $\Delta^0$ have a small $N$-spectral difference.
The details will be omitted. Last we slightly modify the singular metric $g_\varepsilon$ to get the desired smooth metric $h$.
\end{proof}

Then we introduce a lemma that will be used to prescribe the volume of $M$.

\begin{lemma}\label{van Neu}
Let $(M,g)$ be a compact Riemannian manifold with piecewise smooth boundary $\partial M$. For $p\in \partial M$, denote $b_p(\varepsilon)$ be the $\varepsilon$-geodesic ball in $\partial M$ with center $p$. For any integer $k \ge 1$, let $\lambda_k^\mathcal D (M)$ be the $k$-th eigenvalue of the Dirichlet Laplacian on $M$ and $\lambda^\varepsilon_k(M)$ be the $k$-th eigenvalue of the mixed boundary-valued Laplacian on $M$ with
\begin{center} Dirichlet on $\partial M \setminus b_p(\varepsilon)$ and Neumann on $b_p(\varepsilon)$. \end{center}
Then
\begin{equation}
\lim_{\varepsilon\to 0} \lambda^\varepsilon_k(M)=\lambda_k^\mathcal D (M)
\end{equation}
\end{lemma}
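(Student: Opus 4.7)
The plan is to prove two matching inequalities: $\limsup_{\varepsilon\to 0}\lambda_k^\varepsilon(M)\leq\lambda_k^\mathcal{D}(M)$ and $\liminf_{\varepsilon\to 0}\lambda_k^\varepsilon(M)\geq\lambda_k^\mathcal{D}(M)$. The first is immediate from min-max: the Dirichlet form domain $H_0^1(M)$ is contained in the mixed form domain $H^1_{0,\partial M\setminus b_p(\varepsilon)}(M)$ (any function vanishing on all of $\partial M$ certainly vanishes on $\partial M\setminus b_p(\varepsilon)$), while both Rayleigh quotients use the same Dirichlet integral and $L^2$-norm; a smaller admissible class gives larger $k$-th eigenvalue.

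For the reverse inequality I would argue by weak compactness. Fix a sequence $\varepsilon_n\to 0$ realizing $\liminf_{\varepsilon\to 0}\lambda_k^\varepsilon(M)$, and let $\{u_i^n\}_{i=1}^k$ be an $L^2$-orthonormal family of mixed eigenfunctions with eigenvalues $\lambda_i^{\varepsilon_n}$. Since $\int_M|\nabla u_i^n|^2=\lambda_i^{\varepsilon_n}\leq\lambda_k^\mathcal{D}(M)$, each sequence is uniformly bounded in $H^1(M)$. By Rellich compactness, after diagonal extraction, $u_i^n\rightharpoonup u_i^0$ weakly in $H^1(M)$ and strongly in $L^2(M)$, while $\lambda_i^{\varepsilon_n}\to\lambda_i^0$. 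The limits $\{u_i^0\}_{i=1}^k$ remain $L^2$-orthonormal, hence linearly independent.

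The crucial step is showing each $u_i^0$ belongs to $H_0^1(M)$. The trace map $\Tr\colon H^1(M)\to H^{1/2}(\partial M)$ is weakly continuous, and the embedding $H^{1/2}(\partial M)\hookrightarrow L^2(\partial M)$ is compact, so $\Tr(u_i^n)\to\Tr(u_i^0)$ in $L^2(\partial M)$. For any compact set $K\subset\partial M\setminus\{p\}$ one has $K\subset\partial M\setminus b_p(\varepsilon_n)$ for $n$ large, on which $\Tr(u_i^n)\equiv 0$; letting $n\to\infty$ and exhausting $\partial M\setminus\{p\}$ by such $K$ shows $\Tr(u_i^0)=0$ a.e.\ on $\partial M$. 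Next, for any test function $\varphi\in H_0^1(M)$, which lies in the mixed form domain for every $\varepsilon_n$, passing to the limit in the weak eigenfunction equation $\int_M\nabla u_i^n\cdot\nabla\varphi=\lambda_i^{\varepsilon_n}\int_M u_i^n\varphi$ (using weak $L^2$-convergence of the gradients and strong $L^2$-convergence of the functions) identifies $u_i^0$ as a weak Dirichlet eigenfunction with eigenvalue $\lambda_i^0$. Applying min-max for $\Delta^\mathcal{D}$ to the $k$-dimensional subspace $\mathrm{span}(u_1^0,\dots,u_k^0)\subset H_0^1(M)$ yields
\[
\lambda_k^\mathcal{D}(M)\;\leq\;\max_{1\leq i\leq k}\lambda_i^0\;=\;\lambda_k^0\;=\;\lim_{n\to\infty}\lambda_k^{\varepsilon_n},
\]
which is the desired bound.

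The only nontrivial step is the trace argument forcing $u_i^0\in H_0^1(M)$; everything else is a standard packaging of weak compactness and min-max. Notably, no capacity estimate or logarithmic cutoff near $p$ is needed, so the argument is actually dimension-free and works for every $n\geq 2$.
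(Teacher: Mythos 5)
Your proof is correct, but it takes a genuinely different route from the paper's. The paper decomposes the mixed form domain as $H^1_\varepsilon(M)=\mathcal{H}_0\oplus\mathcal{H}_\infty(\varepsilon)$ with $\mathcal{H}_0=H^1_0(M)$ and $\mathcal{H}_\infty(\varepsilon)$ the $Q$-orthogonal complement (harmonic functions with the mixed boundary data), then proves by a compactness-and-trace contradiction argument that $\inf\{Q(f)/\|f\|_{L^2}^2: f\in\mathcal{H}_\infty(\varepsilon)\}\to\infty$ as $\varepsilon\to0$, and finally invokes Theorem~\ref{ct1} to conclude that $Q|_{\mathcal{H}_0}$ and $Q$ have small $N$-spectral difference. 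Your argument instead works directly on the eigenfunctions: it extracts a weakly $H^1$-convergent subsequence, uses compactness of the trace operator to force the limit into $H^1_0(M)$, passes to the limit in the weak eigenfunction identity, and finishes with min-max. The key technical input -- compactness of the trace map $H^1(M)\to L^2(\partial M)$ driving the limit's boundary values to zero -- is the same in both proofs, but your packaging is more elementary and self-contained: it avoids the machinery of Theorem~\ref{ct1}, does not need the spectral-gap hypothesis $(\ast)$, and gives the convergence for each $k$ independently. You could streamline further: since the mixed form domains are nested and shrink to $H^1_0(M)$ as $\varepsilon\to0$, the eigenvalues $\lambda^\varepsilon_k(M)$ are monotone non-decreasing in $1/\varepsilon$, so the limit exists automatically and only the lower bound needs work.

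One trade-off worth flagging: the paper's detour through $\mathcal{H}_\infty(\varepsilon)$ is not gratuitous. In the proof of the main theorem the authors explicitly reuse the intermediate claim ("By the proof of Lemma~\ref{van Neu}, for any $T>0$, one has $\int_M|\nabla f|^2\geq T\int_M f^2$ for $f\in\mathcal{H}_\infty$ for $c$ small enough") to verify the hypothesis of Theorem~\ref{ct1} on the glued manifold $M_R$. Your proof, while correct for the lemma as stated, does not produce that quantitative lower bound on the harmonic complement as a byproduct, so if your proof were substituted into the paper one would still have to establish that claim separately. Also, your closing remark that "no capacity estimate is needed" is accurate but does not distinguish your approach from the paper's -- the paper's argument is equally free of capacity estimates and equally dimension-independent.
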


\begin{proof}Firstly, we decompose the space
\[H_\varepsilon^1=\{\ f\in H^1(M)\ |\ f|_{\partial M \setminus b_p(\varepsilon)}=0\}\]
to $\mathcal{H}_0\oplus \mathcal{H}_\infty$ where $\mathcal{H}_0=H_0^1(M)$ and
\[\mathcal{H}_\infty=\mathcal{H}_\infty(\varepsilon)=\{\ f\in H^1_\varepsilon\ |\ \Delta f=0\text{ in }M\}.\]
Then for any $\varphi_0\in \mathcal{H}_0$ and $\varphi_\infty \in \mathcal{H}_\infty$,
\[\int_M \nabla \varphi_0 \cdot \nabla \varphi_\infty \mathrm{d}V_g=0.\]
Let $Q=Q(\varepsilon)$ be the quadratic form on $H^1_\varepsilon$ given by
\[Q(f)=\int_M |\nabla f|^2 \mathrm{d}V_g.\]
We claim that the lower bound of $Q|_{\mathcal{H}_\infty(\varepsilon)}$ tends to $\infty$ as $\varepsilon \to 0$. Suppose that there exists a sequence $\varepsilon_n\to 0$ and a constant $C > 0$ such that for any $n$, there exists $f_n\in H_\infty(\varepsilon_n)$ with
\[ \| f_n \|_{L^2(M)}=1,\qquad Q(f_n) \le C. \]
Then $\{f_n\}$ is a bounded subset in $H^1(M)$. So one can assume that there exists $f\in H^1(M)$ such that $\|f\|_{L^2(M)}=1$ and
\[ f_n \rightharpoonup f\text{ in }H^1(M),\qquad f_n \to f\text{ in }L^2(M).\]
By the compactness of the trace operator $T: H^1(M) \to L^2(\partial M)$, one has $f\in H^1_0(M)=\mathcal{H}_0$. It follows
\[ \int_M |\nabla f|^2 \mathrm{d}V_g=\lim_{n\to \infty} Q(f_n,f)=0,\]
so $f\equiv 0$ which contradicts to the fact $\|f\|_{L^2(M)}=1$. This proves the claim. Now Proposition \ref{van Neu} follows from Theorem \ref{ct1}.
\end{proof}

With all these preparations, one can prove Theorem \ref{mainthm}. 

\begin{proof}[The Proof of Theorem \ref{mainthm}]
Firstly, for any fixed $r>0$ small enough, we construct a family of stable metrics
\[ F^M_r: B \to \mathcal{M}(M), \qquad p \to F^M_r(p)\]
so that there exists a metric in this family with prescribed Dirichlet eigenvalues. For this purpose we
embed the surface $X_N$  constructed in Section \ref{sec4} into $M$ such that the image of $\partial X_N$ is in $\partial M$. Let $\Omega$ be a tubular neighborhood of $X_N$ in $M$ which is diffeomorphic to $X_N\times B^{n-2}$, where $B^{n-2}$ is the unit ball in $\mathbb{R}^{n-2}$. For a family of stable metrics
\[F:B\to\mathcal{M}(X_N),\]
we endow $\Omega$ a family of metrics with an extra parameter $r$,
\[F_r:B\to\mathcal{M}(\Omega),\qquad p\to F(p) + r^2g_{\mathrm{Eucl}}.\]
Consider the mixed boundary-valued Laplacian $\Delta^0_\Omega$ on $\Omega$ with
\[\text{Dirichlet on }\partial X_N\times B^{n-2},\qquad\text{Neumann on }(X_N\setminus\partial X_N)\times\partial B^{n-2},\]
i.e. the Friedrich extension of the quadratic form $q(f)=\int_\Omega|\nabla f|^2$ with domain
\[\mathcal{D}=\{f\in H^1(\Omega)|\ f=0\text{ on }\partial X_N\times B^{n-2}\}.\]
Then we have a $q$-orthogonal decomposition $\mathcal{D}=\mathcal{H}_\infty\oplus\mathcal{H}_0$, where
\begin{equation*}
\begin{aligned}
&\mathcal{H}_\infty=\{f\in\mathcal{D}|\ \int_{B^{n-2}}f(x,y)\mathrm{d}y=0,\ \forall x\in X_N\}, \\
&\mathcal{H}_0=\{f\in\mathcal{D}|\ f(x,y)\text{ is independent of } y\}.
\end{aligned}
\end{equation*}
By the fact that the first nonzero Neumann eigenvalue of $(B^{n-2},r^2g_{\mathrm{Eucl.}})$ tends to infinity as $r$ goes to zero and by Theorem \ref{ct1},  $F_r$ is spectrally near to $F$ when $r$ is small enough. Thus one can prescribe the eigenvalues of $\Delta^0_\Omega$  to be the given sequence in Theorem \ref{mainthm}. Next, Lemma \ref{At} allows us to get the desired stable metrics $F^M_r$ on $M$ and thus, one can prescribe the Dirichlet eigenvalues on $M$. The detail is similar to the proof of the main Theorem in \cite{CV2} and thus is omitted.

To prescribe the volume simultaneously, we fix $r$ small enough such that
\[ \mathrm{Vol}(M,F^M_r(p)) < \frac{V}{2}.\]
Then attach a $n$-dimensional cuboid $R=[0,a]^{n-1} \times [0,b]$ to $M$ along a small boundary $(n -1)$-dimensional ball $I$ with radius $c$, as illustrated below

\includegraphics[scale=0.5]{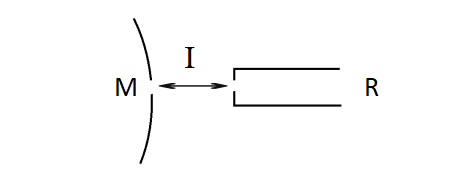}\\
where $a,b,c$ will be carefully chosen below, with
\[\mathrm{Vol}(M,F^M_r(p))+\mathrm{Vol}(R)=V.\]
Denote the resulting manifold by $M_R=M \cup_I R$. Decompose
\[ H^1_0(M_R)=\mathcal{H}_0 \oplus \mathcal{H}_\infty,\]
where $\mathcal{H}_0=H^1_0(M)$ and
\[\mathcal{H}_\infty=\mathcal{H}_\infty(I)=\{ f \in H^1_0(M_R)\ |\ \Delta f=0 \text{ in } M\}.\]
By the proof of Lemma \ref{van Neu}, for any $T>0$, one has
\[ \int_M |\nabla f|^2 \mathrm{d} V_{F^M_r(p)} \geq T \int_M f^2 \mathrm{d} V_{F^M_r(p)},\qquad \forall f \in \mathcal{H}_\infty\]
for $c$ small enough.

Let $\lambda_1(c)$ be the first eigenvalues of the mixed boundary-valued Laplacian on $R$ with
\begin{center} Dirichlet on $\partial R\!\setminus\! I\;$ and $\;$ Neumann on $I$. \end{center}
By Lemma \ref{van Neu}, for  $c$  small enough one has
\[\lambda_1(c) \ge \frac{\lambda^\mathcal D_1(R)}{2}=\frac{\pi^2}{2}(\frac{n-1}{a^2}+\frac{1}{b^2}).\]
Moreover, one can make $\lambda^\mathcal{D}_1(R) \ge 2T$ by carefully choosing $a$ and $b$. So that
\[ \int_{M_R} |\nabla f|^2 \mathrm{d} V =\int _M |\nabla f|^2 \mathrm{d} V_{F^M_r(p)} + \int_R |\nabla f|^2 \mathrm{d} V \ge T \int_{M_R} f^2 \mathrm{d} V,\ \forall f\in \mathcal{H}_\infty,\]
after carefully choosing $a$, $b$, $c$. By Theorem \ref{ct1} and the fact that $M_R$ is diffeomorphic to $M$, when $T$ is big enough, one gets a family of stable metrics on $M$ such that the volume of $M$ under any metric in this family is $V$. One also can slightly change the metrics on $M_R$ such that the new metrics are smooth with the same area and are still stable. This completes the proof.
\end{proof}

\section{The proof of Theorem \ref{ct1}}

\par Before giving the proof of Theorem \ref{ct1}, we  provide a counterexample of Proposition I.5 in \cite{CV2}: 

\begin{example}\label{couExmp}
Let $\mathcal{H}$ be the Euclidean space $\mathbb{R}^4$, and $Q=Q_b$ be the quadratic form on $\mathbb{R}^4$ with matrix representation
\[\begin{pmatrix}1&0&0&-b \\ 0&1& -b& 0  \\ 0& -b& b^2& 0 \\ -b& 0 &0 &b^2\end{pmatrix}\]
under the canonical basis.
For simplify, we denote
\[\varphi_\infty^1=(1,0,0,0),\qquad \varphi_\infty^2=(0,1,0,0),\]
and
\[\varphi_0^1=(0,b,1,0),\qquad \varphi_0^2=(b,0,0,1).\]
Then
 \[Q(\varphi^i_\infty,\varphi^j_0)=0,\qquad i=1,2;\ j=1,2\]
and thus we have a $Q$-orthogonal decomposition $\mathcal{H} =\mathcal{H}_\infty\oplus\mathcal{H}_0$, where
\[\mathcal{H}_\infty=\mathbb{R}  \varphi_\infty^1\oplus\mathbb{R} \varphi_\infty^2, \qquad \mathcal{H}_0=\mathbb{R} \varphi_0^1\oplus\mathbb{R}\varphi_0^2.\]
For any $x \in \mathcal{H}_\infty$, one has $Q(x) \ge \|x\|^2$. However, if we let
\[x=\varphi_0^1\wedge\varphi^1_\infty+\varphi^2_\infty \wedge \varphi_0^2+b\varphi^1_\infty \wedge \varphi^2_\infty,\]
then direct computations yield
\[Q^{\wedge^2}(x)=4, \qquad |x|^2_{\wedge^2}=2b^2+4.\]
Thus for the inequality $Q^{\wedge^2}(x) \ge C_2 |x|^2_{\wedge^2}$ to be true, the constant $C_2$ should depend on the choice of $b$, which contradicts to the conclusion of Proposition I.5 in \cite{CV2}.
\end{example}

\begin{proof}[The Proof of Theorem \ref{ct1}]

According to Lemma \ref{crit} (where $A_0=A_1=\mathrm{Id}$ in the current setting), it is enough to prove that as $C \to \infty$,
\begin{enumerate}
  \item $\|B\| \to 0$ , \
  \item $\max_{1\leq k\leq N}|\lambda_k(q_1)-\lambda_k(q_0)| \to 0$.
\end{enumerate}

Let $\{\psi_i\}_{i\geq1}$ be an orthonormal set of eigenfunctions corresponding to $\{\mu_i\}_{i \ge 1}$, and likewise let $\{\lambda_i\}_{i\geq1}$, $\{\varphi_i\}_{i\geq1}$ be eigenvalues and corresponding orthonormal set of eigenfunctions for $Q$. Let
\[E_0^m=\mathrm{span}\{\psi_1,\cdots,\psi_m\},\qquad  E_1^m=\mathrm{span}\{\varphi_1,\cdots,\varphi_m\},\]
where $1 \le m \le N$ is an integer  such that $\mu_m<\mu_{m+1}$,
and let $P_m$ be the orthogonal projection to $E_0^m$.

We first claim that $P_m:E_1^m\to E_0^m$ is surjective
for $C$ large enough.
In fact,
suppose $P_m$ is not surjective, then there exists $\varphi\in E_1^m$ with $|\varphi|=1$ such that $P_m\varphi=0$. Let $\varphi=\varphi^0+\varphi^\infty\in\mathcal{H}_0\oplus\mathcal{H}_\infty$, then
  \begin{equation*}
  M\geq\mu_m\geq\lambda_m\geq Q(\varphi)=Q(\varphi^0)+Q(\varphi^\infty)\geq C|\varphi^\infty|^2.
  \end{equation*}
So
  \begin{equation}\label{phi-infty}
    |\varphi^\infty|\leq\sqrt{\frac{M}{C}}\; \text{ and }\; |\varphi^0|\geq 1-\sqrt{\frac{M}{C}}.
  \end{equation}
Since $P_m\varphi=0$, we have $|P_m\varphi^0|=|P_m\varphi^\infty|\leq|\varphi^\infty|\leq\sqrt{\frac{M}{C}}$
and thus
\[|(I-P_m)\varphi^0|\geq|\varphi^0|-|P_m\varphi^0|\geq 1-2\sqrt{\frac{M}{C}}.\]
Since $(I-P_m)\varphi^0 \in \mathcal H_0$ and $P_m((I-P_m)\varphi^0 )=0$, we see
\[Q((I-P_m)\varphi^0)\geq(\mu_m+\tilde \delta)
 (1-2\sqrt{\frac{M}{C}})^2\]
 and thus
 \begin{equation*}
 \mu_m\geq Q(\varphi^0)=Q(P\varphi^0)+Q((I-P)\varphi^0)\geq(\mu_m+\tilde \delta)
 (1-2\sqrt{\frac{M}{C}})^2.
 \end{equation*}
It follows
 \begin{equation*}
 (1-2\sqrt{\frac{M}{C}})^2\leq\frac{\mu_m}{\mu_m+\tilde \delta}
 \leq\frac{M}{M+\tilde \delta}.
 \end{equation*}
So we get a contradiction if $C$ is large. More precisely, if we take
\[C>4M(1-\sqrt{\frac{M}{M+\tilde \delta}})^{-2},\]
then  $P_m:E_1^m\to E_0^m$ is surjective, which proves the claim.

As a consequence, for such a $C$, there exists \[B\in\mathcal{L}(E_0^N,{(E_0^N)}^\perp)\]
such that $E_1^N$ is the graph of $B$.

Next we prove (2) first.
Let $\varphi_i=\varphi_i^0+\varphi_i^\infty\in\mathcal{H}_0\oplus
\mathcal{H}_\infty$. By (\ref{phi-infty}),
 \begin{equation*}
 |\langle\varphi_i^0,\varphi_j^0\rangle-\langle\varphi_i,\varphi_j\rangle|\leq 2\sqrt{\frac{M}{C}}\ \text{for all }1\leq i,j\leq k,
 \end{equation*}
so if we denote $E_k^\ast:=\mathrm{span}\{\varphi_1^0,\cdots,\varphi_k^0\}$, then $\dim(E_k^\ast)=k$
for $C$ large enough. Let
\[T:E_k^*\to\mathcal{H}_\infty\]
be the linear mapping   that maps $\varphi_i^0$ to $\varphi_i^\infty$ for all $1\leq i\leq k$. To control the operator norm $\|T\|$, we observe that
 \begin{equation*}
 |T(\sum_{i=1}^k\alpha_i\varphi_i^0)|=|\sum_{i=1}^k
 \alpha_i\varphi_i^\infty|\leq(\sum_{i=1}^k|\alpha_i|)
 \sqrt{\frac{M}{C}}
 \end{equation*}
and
 \begin{equation*}
 \begin{aligned}
 |\sum_{i=1}^k\alpha_i\varphi_i^0|^2
 =&    |\sum_{1\leq i,j\leq k}\alpha_i\alpha_j\langle\varphi_i^0,\varphi_j^0\rangle|\\
 \geq& \sum_{i=1}^k|\alpha_i|^2(1-\sqrt{\frac{M}{C}})^2-(
 \sum_{i\neq j}|\alpha_i\alpha_j|)2\sqrt{\frac{M}{C}}\\
 \geq& \sum_{i=1}^k|\alpha_i|^2(1-\sqrt{\frac{M}{C}})^2-
 (\sum_{i\neq j}|\alpha_i|^2+|\alpha_j|^2)\sqrt{\frac{M}{C}}\\
 =&   \sum_{i=1}^k|\alpha_i|^2(1-2k\sqrt{\frac{M}{C}}+\frac{M}{C})\\
 \geq& (\sum_{i=1}^k|\alpha_i|)^2\frac{1}{k}(1-2k\sqrt{\frac{M}{C}}
 +\frac{M}{C}).
 \end{aligned}
 \end{equation*}
It follows
\[\|T\|\leq\sqrt{\frac{M}{C}}\sqrt{k}(1-2k\sqrt{\frac{M}{C}}+\frac{M}{C})^{-\frac{1}{2}}.\]
On the other hand, by the min-max principle of eigenvalues,
 \begin{equation*}
 \begin{aligned}
 \lambda_k&=\max_{0\neq\psi\in E_k^*}\frac{Q(\psi+T\psi)}{|\psi+T\psi|^2}\geq\max_{0\neq\psi\in E_k^*}\frac{Q(\psi)}{|\psi+T\psi|^2}\geq\frac{1}{(1+\|T\|)^2}\max_{0\neq\psi\in E_k^\ast}\frac{Q(\psi)}{|\psi|^2}\\
 &\geq\frac{\mu_k}{(1+\|T\|)^2},
 \end{aligned}
 \end{equation*}
and thus
\[0\leq\mu_k-\lambda_k\leq\lambda_k[(1+\|T\|)^2-1]\leq M[(1+\|T\|)^2-1].\]
So $\mu_k-\lambda_k\to0$ as $C\to\infty$. This finishes the proof of (2).

Finally we prove (1), i.e. $\|B\|_{\mathcal{L}(E_0^N,{E_0^N}^\perp)} \to 0$ as $C\to\infty$. Observe that
\[B\circ P_Nx=x-P_Nx, \quad \forall x \in E_1^N,\]
so $\|B\|_{\mathcal{L}(E_0^N,{E_0^N}^\perp)}$ is close to zero if $\|I-P_N\|_{\mathcal{L}(E_1^N,\mathcal{H})}$ is close to zero. For simplicity, we first assume
\[a=\mu_1=\cdots=\mu_i<\mu_{i+1}=\cdots=\mu_N=b.\]
Now $\tilde \delta=\min(\delta,b-a)$. For any $1\leq k \leq i$,
 \begin{equation}\label{mu1}
 \begin{aligned}
 a\geq& Q(\varphi_k)\geq Q(\varphi_k^0)=Q(P_N\varphi_k^0)+Q((I-P_N)\varphi_k^0)\\
 \geq&a|P_N\varphi_k^0|^2+(b+\tilde \delta)|(I-P_N)\varphi_k^0|^2.
 \end{aligned}
 \end{equation}
Since
\[|P_N\varphi_k^0|^2+|(I-P_N)\varphi_k^0|^2 = |\varphi_k^0|^2\geq(1-\sqrt{\frac{M}{C}})^2\geq1-2\sqrt{\frac{M}{C}},\]
the following equation holds
\[|P_N\varphi_k^0|^2\geq1-|(I-P_N)\varphi_k^0|^2-2\sqrt{\frac{M}{C}}.\]
Combine with (\ref{mu1}),
\[|(I-P_N)\varphi_k^0|^2\leq2\sqrt{\frac{M}{C}}\frac{a}{b-a+\tilde \delta} { \leq 2\sqrt{\frac{M}{C}}\frac{M}{\tilde \delta}}.\]
So $|(I-P_N)\varphi_k^0|\to 0$ as $C\to\infty$ for all $1 \leq k \leq i$.

Similarly, one can get
 \begin{equation*}
 |(I-P_i)\varphi_k^0|^2\leq 2\sqrt{\frac{M}{C}}\frac{M}{\tilde \delta}
 \end{equation*}
for all $1\leq k \leq i$. By careful choosing $\{\psi_k\}_{k=1}^i$, one can guarantee that $|\varphi^0_k-\psi_k|\to 0$ as $C\to\infty$ for all $1\leq k \leq i$. Then for all $1 \leq k \leq i$, $i+1 \leq l \leq N$,
 \begin{equation*}
 \begin{aligned}
 |\langle\varphi_l^0,\psi_k\rangle|\leq&|\langle\varphi_l^0,
 \varphi_k^0\rangle|+|\langle\varphi_l^0,\psi_k-\varphi_k^0\rangle|\\
 \leq&2\sqrt{\frac{M}{C}}+|\langle\varphi_l^0,\psi_k-\varphi_k^0\rangle|\to0,\text{ as}\ C\to\infty.
 \end{aligned}
 \end{equation*}
So if we write $P_N\varphi_l^0=\alpha_l+\varphi'_l$ with $\alpha_l\in E^i_0$ and $\varphi'_l\perp E^i_0$, then $|\alpha_l|\to0$ as $C\to\infty$. By our assumptions,
 \begin{equation}\label{b}
 \begin{aligned}
 b=&\mu_l\geq\lambda_l=Q(\varphi_l)\geq Q(\varphi_l^0)=Q(P_N\varphi_l^0)+Q((I-P_N)\varphi_l^0)\\
 \geq&b|\varphi_l'|^2+(b+\tilde \delta)|(I-P_N)\varphi_i^0|^2.
 \end{aligned}
 \end{equation}
And
\[|\varphi_l^0|^2=|\alpha_l|^2+|\varphi_l'|^2+|(I-P_N)\varphi_l^0|^2\geq(1-\sqrt{\frac{M}{C}})^2\geq1-2\sqrt{\frac{M}{C}}\]
implies
\begin{equation}\label{psi-i'}
|\varphi_l'|^2\geq1-2\sqrt{\frac{M}{C}}-|\alpha_l|^2-|(I-P_N)\varphi_l^0|^2.
\end{equation}
Then combining (\ref{psi-i'}) with (\ref{b}),
\[|(I-P_N)\varphi_l^0|^2\leq\frac{b}{\tilde \delta}(2\sqrt{\frac{M}{C}}+|\alpha_i|^2)\]
and thus $|(I-P_N)\varphi_l^0|\to 0$ as $C\to\infty$ for all $i+1 \leq l \leq N$. Combing all discussions above and the fact
\[ |(I-P_N)\varphi^\infty_j| \leq |\varphi^\infty_j| \leq \sqrt{\frac{M}{C}},\qquad \forall 1 \leq j \leq N, \]
we have $\|I-P_N\|_{\mathcal{L}(E_1^N,\mathcal{H})}\to 0$ as $C\to\infty$, which implies $\|B\|_{\mathcal{L}(E_0^N,{E_0^N}^\perp)}\to 0$ as $C\to\infty$.
This proves (1) in the simple case.

To prove (1) for the general case
\[\mu_1 =\cdots =\mu_{i_1}< \mu_{i_1+1}=\cdots=\mu_{i_2}<\cdots \leq\mu_N< \mu_{N+1}\leq M,\]
one just apply the same idea several times, namely, project onto the eigenspace $E_{\lambda_i}$ for each distinguished eigenvalue below $\mu_{N+1}$.

\end{proof}

\end{document}